%
%

\documentclass{amsart}
\usepackage{amscd,graphicx,epsfig}
\usepackage{amssymb}

\title[Lie subalgebras of vector fields and the Jacobian Conjecture]{{\normalfont\rm Lie subalgebras of vector fields and the Jacobian Conjecture}}
\author{Andriy Regeta}
\date{September, 2013}
\thanks{The author is supported by a grant from the SNF (Schweizerischer Nationalfonds)}

\address{Mathematisches Institut,
Universit\"at Basel, Rheinsprung 21, CH-4051 Basel}
\email{Andriy.Regeta@unibas.ch}


\newtheorem{thm}{Theorem}[section]
\newtheorem*{thm*}{Theorem}
\newtheorem*{conj*}{Conjecture}

\newtheorem*{prob*}{Problem}
\newtheorem*{satz*}{Satz}

\newtheorem{prop}{Proposition}[section]
\newtheorem*{prop*}{Proposition}
\newtheorem{lem}[prop]{Lemma}
\newtheorem*{lem*}{Lemma}
\newtheorem{cor}[prop]{Corollary}
\newtheorem*{cor*}{Corollary}

\theoremstyle{definition}
\newtheorem{defn}[prop]{Definition}

\theoremstyle{remark}
\newtheorem*{rem*}{Remark}
\newtheorem{rem}[prop]{Remark}

\newcommand{\ddx}{\frac{\partial}{\partial x}}
\newcommand{\ddy}{\frac{\partial}{\partial y}}

\newcommand{\dx}{\partial_{x}}
\newcommand{\dy}{\partial_{y}}

\newcommand{\CC}{{\mathbb C}}

\newcommand{\Atwo}{{\mathbb A}^{2}}
\newcommand{\Aone}{{\mathbb A}^{1}}

\newcommand{\simto}{\xrightarrow{\sim}}
\newcommand{\be}{\begin{enumerate}}
\newcommand{\ee}{\end{enumerate}}

\DeclareMathOperator{\End}{End}
\DeclareMathOperator{\id}{id}

\DeclareMathOperator{\SLtwo}{SL_{2}}

\DeclareMathOperator{\Aut}{Aut}
\DeclareMathOperator{\SAut}{SAut}

\DeclareMathOperator{\VF}{Vec}
\DeclareMathOperator{\GL}{GL}
\DeclareMathOperator{\Lie}{Lie}
\newcommand{\VFd}{\VF^{c}}
\newcommand{\VFzero}{\VF^{0}}
\DeclareMathOperator{\Der}{Der}
\DeclareMathOperator{\Jac}{Jac}
\DeclareMathOperator{\Div}{div}
\DeclareMathOperator{\cent}{\mathfrak{cent}}
\DeclareMathOperator{\rad}{\mathfrak{rad}}

\newcommand{\px}{\frac{\partial }{\partial x}}
\newcommand{\py}{\frac{\partial }{\partial y}}

\newcommand{\AutK}{\Aut(K[x,y])}
\newcommand{\EndK}{\End(K[x,y])}
\newcommand{\SAutK}{\SAut(K[x,y])}
\newcommand{\AutLA}{\Aut_{\text{\it LA}}}
\newcommand{\SAutLA}{\SAut_{\text{\it LA}}}

\newcommand{\bbmat}{\begin{bmatrix}}
\newcommand{\ebmat}{\end{bmatrix}}
\newcommand{\bsmat}{\begin{smallmatrix}}
\newcommand{\esmat}{\end{smallmatrix}}

\DeclareMathOperator{\aff}{\mathfrak{aff}}
\DeclareMathOperator{\saff}{\mathfrak{saff}}
\DeclareMathOperator{\sltwo}{\mathfrak{sl}_{2}}
\DeclareMathOperator{\Aff}{Aff}
\DeclareMathOperator{\SAff}{SAff}
\DeclareMathOperator{\tdeg}{tdeg}

\newcommand{\s}{\mathfrak{s}}

\newcommand{\Pb}{{\bar P}}
\newcommand{\Ptwo}{{P_{\leq 2}}}
\newcommand{\Pbtwo}{{\bar{P}_{\leq 2}}}

\renewcommand{\phi}{\varphi}
\def \itt #1,#2:{\medskip\item[$\bullet$] %
     page\ \ignorespaces#1, line\ \ignorespaces#2:\ \ignorespaces}

\newcommand{\lab}[1]{\label{#1}}

\frenchspacing

\begin{document}

\begin{abstract} We study Lie subalgebras $L$ of the vector fields $\VFd(\Atwo)$ of affine 2-space $\Atwo$ of constant divergence, and we classify those $L$ which are isomorphic to the Lie algebra $\aff_{2}$ of the group $\Aff_{2}(K)$ of affine transformations of $\Atwo$. We then show that the following statements are equivalent:
\be
\item The Jacobian Conjecture holds in dimension 2;
\item All Lie subalgebras $L \subset \VFd(\Atwo)$ isomorphic to $\aff_{2}$ are conjugate under $\Aut(\Atwo)$;
\item All Lie subalgebras $L \subset \VFd(\Atwo)$ isomorphic to $\aff_{2}$  are algebraic.
\ee
\end{abstract}

\maketitle
\section{Introduction}

It is a well-known consequence of the amalgamated product structure of $\Aut(\Atwo)$ that every reductive subgroup $G \subset \Aut(\Atwo)$ is conjugate to a subgroup of $\GL_{2}(\CC) \subset \Aut(\Atwo)$, i.e.  there is a $\psi \in \Aut(\Atwo)$ such that $\psi G \psi^{-1} \subset \GL_2(\CC)$ (\cite{Ka1979Automorphism-group}, cf. \cite{Kr1996Challenging-proble}). The ``Linearization Problem'' asks whether the same holds in higher dimension.
It was shown by Schwarz in \cite{Sch89} that this is not the case in dimensions $n \ge 4$ .

In this paper we consider the analogue of the Linearization Problem for Lie algebras. It is known that the Lie algebra $\Lie(\Aut(\Atwo))$ of the ind-group $\Aut(\Atwo)$ is isomorphic to the Lie algebra $\VF^c(\Atwo)$ of vector fields  of constant divergence (\cite{Sh81}, cf. \cite{Kum02}). 
We will see  that  the Lie subalgebra 
$$
K(x^2 \px - 2xy \py) \oplus K (x\px - y \py) \oplus K\px \subset \VF^{c}(\Atwo)
$$
is isomorphic to $\sltwo$, but not conjugate to the standard $\sltwo \subset \VF(\Atwo)$ under 
$\Aut(\Atwo)$ (Remark~\ref{example.rem}). On the other hand, for other subalgebras of $\VF^c(\Atwo)$ the situation is different.
Let  $\Aff_2(K)\subset\Aut(\Atwo)$ be  the group of affine transformations and  $\SAff_2(K)$ the subgroup of affine transformation  with determinant equal to 1, and denote by $\aff_{2}$, resp. $\saff_{2}$ their Lie algebras. The first result we prove is the following (see Proposition~\ref{subVec2.prop}). 
For $f \in K[x,y]$ we set $D_{f}:=-f_{y}\ddx + f_{x}\ddy \in\VF(\Atwo)$.

\begin{prop*}
Let $L\subset \VFd(\Atwo)$ be a Lie subalgebra isomorphic to  $\aff_2$. Then there is an \'etale map $\alpha=(f,g)$ such that $L=\alpha(\aff_2)$. More precisely, if $(D_{f},D_{g})$ is a basis of the radical of $[L,L]$, then 
$$
L = \langle D_{f},D_{g},D_{f^{2}}, D_{g^{2}}, fD_g,gD_f \rangle,
$$ 
and one can take $\alpha=(f,g)$.
\end{prop*}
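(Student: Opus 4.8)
The plan is to recover all of $L$ from the two-dimensional abelian ideal $\rad([L,L])$. First recall the structure of $\aff_{2}=\mathfrak{gl}_{2}\ltimes K^{2}$: its derived algebra is $\saff_{2}=\sltwo\ltimes K^{2}$, and $\rad(\saff_{2})$ is the abelian ideal $K^{2}$ of translations. Hence under any isomorphism $L\cong\aff_{2}$ the subalgebra $R:=\rad([L,L])$ is two-dimensional and abelian. The basic fact about constant divergence is that for $V,W\in\VFd(\Atwo)$ one has $\Div([V,W])=V(\Div W)-W(\Div V)=0$, so $[\VFd,\VFd]\subseteq\VFzero$; in particular $R\subseteq[L,L]\subseteq\VFzero$. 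Since a polynomial vector field is divergence free iff it equals $D_{h}$ for some $h$ (unique up to a constant), we may write $R=\langle D_{f},D_{g}\rangle$ with $(D_{f},D_{g})$ a basis. Using $[D_{f},D_{g}]=D_{\{f,g\}}$, where $\{f,g\}:=\Jac(f,g)=f_{x}g_{y}-f_{y}g_{x}$, the fact that $R$ is abelian gives $\{f,g\}=c$ for a constant $c$.

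The heart of the matter, and the step I expect to be the main obstacle, is to show $c\neq 0$, i.e. that $\alpha=(f,g)$ is étale. Pick an $\sltwo$-triple $(e,h,\ell)$ in a Levi complement $\s\subset[L,L]$ and normalise the weight basis of $R$ so that, as operators on $R$, $e$ sends $D_{g}\mapsto D_{f}$ and kills $D_{f}$, while $\ell$ sends $D_{f}\mapsto D_{g}$ and kills $D_{g}$. Writing $e=D_{p}$, $\ell=D_{r}$ (again divergence free), these relations read $\{p,f\}=\beta$, $\{p,g\}=f+\alpha$, $\{r,g\}=\delta$, $\{r,f\}=g+\gamma$ for constants $\alpha,\beta,\gamma,\delta$. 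Now suppose $c=0$. Then $\Jac(f,g)=0$ with $f$ nonconstant forces $dg=\lambda\,df$ for some $\lambda\in K(x,y)$, whence $\{H,g\}=\lambda\{H,f\}$ for every $H$. Applying this with $H=p$ gives $f+\alpha=\lambda\beta$, so $\beta\neq0$ (else $f$ is constant) and $\lambda=(f+\alpha)/\beta$; applying it with $H=r$ gives $\delta=\lambda(g+\gamma)$, hence $(f+\alpha)(g+\gamma)=\beta\delta\in K$. In the domain $K[x,y]$ a product of two polynomials is constant only if both are, contradicting that $f+\alpha$ is nonconstant. Therefore $c\neq0$.

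With $c\neq0$ the rest is linear algebra in the frame $(D_{f},D_{g})$: the coefficient matrix of $(D_{f},D_{g})$ has determinant $\{f,g\}=c\in\Kst$, so every $X\in\VF(\Atwo)$ is uniquely $X=AD_{f}+BD_{g}$ with $A,B\in K[x,y]$. Solving $\{p,f\}=\text{const}$, $\{p,g\}=f$ (absorbing the constant into $f$, which leaves the final span unchanged) gives $p=\tfrac{1}{2c}f^{2}$ up to a constant, so $e=D_{p}$ is a multiple of $D_{f^{2}}$; symmetrically $\ell$ is a multiple of $D_{g^{2}}$, and $h$ is proportional to $D_{fg}=fD_{g}+gD_{f}$, giving $[L,L]=\langle D_{f},D_{g},D_{f^{2}},D_{g^{2}},fD_{g}+gD_{f}\rangle$. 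For the last dimension take $E\in L\setminus[L,L]$; it centralises $\s$ and scales $R$ by some $\mu\neq0$. Writing $u=f$, $v=g$ (étale coordinates, with $\{u,H\}=c\,\partial_{v}H$ and $\{v,H\}=-c\,\partial_{u}H$) and $E=AD_{f}+BD_{g}$, the relations $[E,D_{f}]=\mu D_{f}$, $[E,D_{g}]=\mu D_{g}$ become $\partial_{u}A=0$, $\partial_{v}A=-\mu/c$, $\partial_{v}B=0$, $\partial_{u}B=\mu/c$, so $A=-\tfrac{\mu}{c}g+a_{0}$, $B=\tfrac{\mu}{c}f+b_{0}$ and $E=-\tfrac{\mu}{c}(gD_{f}-fD_{g})+a_{0}D_{f}+b_{0}D_{g}$. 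Since $a_{0}D_{f}+b_{0}D_{g}\in[L,L]$ and $E\in L$, we get $gD_{f}-fD_{g}\in L$. Thus $L$ contains $D_{f},D_{g},D_{f^{2}},D_{g^{2}},fD_{g},gD_{f}$, which are independent (the first five are divergence free and $f,g,f^{2},g^{2},fg$ are independent modulo constants as $f,g$ are algebraically independent, while $gD_{f}-fD_{g}$ has divergence $2c\neq0$); as $\dim L=6$ they form a basis. Finally the same frame computation expresses these six fields as the images under $\alpha=(f,g)$ of the standard generators $\partial_{u},\partial_{v},u\partial_{u},v\partial_{v},u\partial_{v},v\partial_{u}$ of $\aff_{2}$, so $L=\alpha(\aff_{2})$, as claimed.
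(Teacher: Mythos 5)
Your proof is correct, but it follows a genuinely different route from the paper's. The paper deduces the $\aff_2$ case from the $\saff_2$ case applied to $[M,M]$, and that case is in turn pulled back through $\mu$ into the Poisson algebra: the whole argument runs through Proposition~\ref{subP.prop} and the characterization Lemma~\ref{charP.lem} of $\Ptwo$, and the crucial nondegeneracy $\{f,g\}\neq 0$ is obtained from Lemma~\ref{lem1}(b) ($\{f,g\}=0$ forces $f,g\in K[h]$, via the normalization of the image curve), after which the $\sltwo$-triple relations give $g=0$. You stay entirely inside $\VF(\Atwo)$: you use $\Div[V,W]=V(\Div W)-W(\Div V)$ to put $[L,L]$ into $\VFzero(\Atwo)$, replace the curve argument by the proportionality $dg=\lambda\,df$ and the elementary fact that $(f+\alpha)(g+\gamma)\in K$ is impossible in $K[x,y]$ for $f$ nonconstant, and then reconstruct the Levi part and the Euler direction by explicit computation in the free frame $(D_f,D_g)$ (whose freeness is exactly $\{f,g\}\in\Kst$). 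This buys a self-contained proof that does not need the Poisson-algebra machinery or the two-step reduction, at the cost of more computation; the paper's route yields the intermediate structural results (Propositions~\ref{subP.prop} and \ref{subVec1.prop}) that are reused later in Theorem~\ref{JC.thm}. Three small points you should tighten, none of them real gaps: (1) not every $E\in L\setminus[L,L]$ centralizes $\s$ --- you must take $E$ to be the image of the Euler element under an isomorphism $L\simeq\aff_2$, or correct an arbitrary $E$ by an element of $[L,L]$; (2) in the nondegeneracy step, if $\beta\delta=0$ the product being constant is not itself a contradiction --- you then get $g+\gamma=0$, which contradicts $D_g\neq 0$; (3) solving $\{p,f\}\in K$, $\{p,g\}=f+\alpha$ determines $p$ only up to an element of $\langle 1,f,g\rangle$, not up to a constant, though this does not affect the span since the extra terms land in $\rad([L,L])$.
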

The analogous statements hold for Lie subalgebras isomorphic to $\saff_2$.
As a consequence of this classification we obtain the next result
(see  Theorem~\ref{JC.thm} and Corollary~\ref{corollary}). Recall that a Lie subalgebra of $\VF(\Atwo)$ is {\it algebraic\/} if it acts locally finitely on $\VF(\Atwo)$.
\begin{prop*} The following statements are equivalent:
\be
\item[(i)] The Jacobian Conjecture holds in dimension 2;
\item[(ii)] All Lie subalgebras $L \subset \VFd(\Atwo)$ isomorphic to $\aff_{2}$ are conjugate under $\Aut(\Atwo)$;
\item[(iii)] All Lie subalgebras $L \subset \VFd(\Atwo)$ isomorphic to $\saff_2$ are conjugate under $\Aut(\Atwo)$;
\item[(iv)] All Lie subalgebras $L \subset \VFd(\Atwo)$ isomorphic to $\aff_2$  are algebraic;
\item[(v)] All Lie subalgebras $L \subset \VFd(\Atwo)$ isomorphic to $\saff_2$  are algebraic;
\ee
\end{prop*}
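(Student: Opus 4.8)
The plan is to prove the equivalence of (i)–(v) by first leveraging the classification Proposition (the one giving $L = \langle D_f, D_g, D_{f^2}, D_{g^2}, fD_g, gD_f\rangle$ with $\alpha = (f,g)$ étale), which converts every statement about abstract Lie subalgebras into a statement about the étale map $\alpha$. The key observation I would exploit is that $\alpha = (f,g)$ is étale precisely when its Jacobian $\det\begin{pmatrix} f_x & f_y \\ g_x & g_y\end{pmatrix}$ is a nonzero constant (after scaling), so the Jacobian Conjecture in dimension 2 is exactly the assertion that every such $\alpha$ is an automorphism of $\Atwo$. I would organize the proof as a cycle of implications rather than proving each pair directly, and I would try to handle the $\aff_2$ and $\saff_2$ versions in parallel, since $\saff_2 = [\aff_2, \aff_2]$ and the radical computation is essentially the same.

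\smallskip

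**The core chain** I would carry out is as follows. First, (i) $\Rightarrow$ (ii): assuming JC, every étale $\alpha = (f,g)$ is an automorphism, so $L = \alpha(\aff_2)$ is literally the image of the standard $\aff_2$ under an element of $\Aut(\Atwo)$, hence conjugate to it. Next, (ii) $\Rightarrow$ (iv): any $L$ conjugate under $\Aut(\Atwo)$ to the standard $\aff_2$ is automatically algebraic, since the standard $\aff_2$ is the Lie algebra of the algebraic group $\Aff_2$ and acts locally finitely, a property preserved by conjugation. The reverse direction (iv) $\Rightarrow$ (i) is where I would do the real work: given an $L$ that is algebraic, I would argue that local finiteness of the $L$-action forces the étale map $\alpha = (f,g)$ arising from the classification to have its component functions $f, g$ lie in a finite-dimensional invariant subspace, which should bound their degrees and ultimately force $\alpha$ to be a polynomial automorphism; contrapositively, a counterexample to JC yields an étale non-automorphism $\alpha$, and I would show the associated $L = \alpha(\aff_2)$ fails to be algebraic. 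The $\saff$ statements (iii) and (v) I would thread into the same cycle, using that an isomorphism $L \cong \saff_2$ extends/restricts compatibly with the $\aff_2$ picture.

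\smallskip

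**The main obstacle** I anticipate is the implication (iv) $\Rightarrow$ (i) (equivalently the failure of algebraicity for a counterexample to JC). The subtlety is that being "algebraic" is defined as acting locally finitely on all of $\VF(\Atwo)$, and I must translate this infinite-dimensional condition into a concrete constraint on $f$ and $g$. The natural approach is to use the semisimple elements of $L$: the torus part of $\aff_2$ (the diagonal $\GL_2$-action) acts on the coordinate ring, and local finiteness of its action, together with the bracket relations, should force $f$ and $g$ to be eigenvectors of a locally finite (hence essentially $\GL_2$-type) action, pinning down their degrees and leading-term structure. The delicate point is ruling out the possibility that a genuinely non-polynomial-automorphism étale $\alpha$ could still generate an algebraic $L$ — this is exactly where the content equivalent to JC resides, so I would expect to invoke the amalgamated-product structure of $\Aut(\Atwo)$ or a degree/valuation argument on $f, g$ to close the gap, and I would verify carefully that no purely Lie-algebraic shortcut circumvents the genuine geometric input of JC.
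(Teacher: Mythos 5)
Your overall architecture (reduce everything to the \'etale map $\alpha=(f,g)$ via the classification proposition, then run a cycle of implications) matches the paper, and the two easy arcs are fine: (i)$\Rightarrow$(conjugacy) because JC makes $\alpha$ an automorphism, and (conjugacy)$\Rightarrow$(algebraic) because the standard $\aff_{2}$ is the Lie algebra of an algebraic group action and this is preserved by conjugation. But the arc you yourself identify as ``the real work,'' namely (algebraic)$\Rightarrow$(i), is left as a sketch with a genuine gap, and the route you propose is not the one that works in the paper. Local finiteness of $L$ on $\VF(\Atwo)$ does not in any direct way put $f$ and $g$ into a finite-dimensional invariant subspace (they are not even canonically elements of a module on which $L$ acts in the relevant sense), and even a degree bound on an \'etale map does not by itself yield invertibility without a further argument of Jung type; your closing remark that you would ``expect to invoke the amalgamated-product structure or a degree/valuation argument'' is precisely the missing content.

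What the paper actually does is reduce (algebraic) to (conjugate) rather than to JC directly, using three specific inputs you do not mention: first, the Cohen--Draisma theorem that a locally finite Lie subalgebra of $\VF(\Atwo)$ lies in $\Phi(\Lie G)$ for an algebraic group $G$ acting on $\Atwo$, which lets one integrate the Levi factor $L_{0}\simeq\sltwo$ of $L$ to an action of a group $S$ with $\Lie S=\sltwo$; second, the Kraft--Popov linearization theorem for $\SLtwo$-actions on $\Atwo$, which produces $\alpha\in\AutK$ with $\alpha(L_{0})$ equal to the standard $\sltwo$; third, the rigidity statement (Corollary~\ref{subVec.cor}) that a copy of $\saff_{2}$ in $\VFd(\Atwo)$ is uniquely determined by its Levi factor, so that $\alpha(L)=\saff_{2}$ exactly, not merely a subalgebra containing the standard $\sltwo$. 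After that, the already-established equivalence (conjugacy)$\Leftrightarrow$(JC) closes the cycle; for that equivalence the paper's argument in the direction (conjugacy)$\Rightarrow$(i) is also worth noting, since it is cleaner than a degree argument: if $\beta(\aff_{2})=\alpha(\aff_{2})$ with $\beta$ an automorphism, then $\beta^{-1}\circ\alpha$ is \'etale and stabilizes $\rad(\saff_{2})=K\dx\oplus K\dy$, which forces it to be affine, hence $\alpha$ is an automorphism. Without the integration and linearization steps your proof of (iv)$\Rightarrow$(i) (and likewise (v)$\Rightarrow$(i)) does not go through.
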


{\small\noindent
\textbf{Acknowledgement}:
The author would like to thank his thesis advisor Hanspeter Kraft for permanent support and help during writing this paper.}

\medskip
\section{The Poisson algebra}
\subsection*{Definitions}
Let $K$ be an algebraically closed field of characteristic zero and let $P$ be the {\it Poisson algebra}, i.e. the Lie algebra with underlying vector space $K[x,y]$ and with Lie bracket $\{f,g\}:=f_{x}g_{y} - f_{y}g_{x}$ for $f,g\in P$. If $\Jac(f,g)$ denotes the {\it Jacobian matrix\/} and  $j(f,g)$ the {\it Jacobian determinant},
$$
\Jac(f,g): = \begin{bmatrix} f_x & f_y \\ g_x &g_y \end{bmatrix},\quad j(f,g) := \det \Jac(f,g),
$$
then $\{f,g\} = j(f,g)$. Denote by $\VF(\Atwo)$ the polynomial vector fields on affine 2-space $\Atwo = K^{2}$, i.e. the derivations of $K[x,y]$:
$$
\VF(\Atwo) := \{ p\dx + q\dy \mid p,q \in K[x,y]\} = \Der(K[x,y]).
$$
There is a canonical homomorphism of Lie algebras
$$
\mu\colon P \to \VF(\Atwo), \ h\mapsto D_{h}:=h_{x}\dy - h_{y}\dx,
$$
with kernel $\ker\mu = K$.

The next lemma collects some properties of the Lie algebra $P$. These results are essentially known, see e.g. \cite{NoNa1988Rings-of-constants}.
If $L$ is any Lie algebra and $X \subset L$ a subset, we define the {\it centralizer\/} of $X$ by
$$
\cent_{L}(X) := \{z \in L \mid [z,x]=0 \text{ for all } x\in X\},
$$
and we shortly write $\cent(L)=\cent_{L}(L)$ for the {\it center\/} of $L$.
\begin{lem}\lab{lem1}
\be
\item The center of $P$ are the constants $K$.
\item Let  $f,g\in P$ such that $\{f,g\}= 0$. Then $f,g\in K[h]$ for some $h\in K[x,y]$.
\item If  $f,g\in P$ such that $\{f,g\}\neq 0$, then $f,g$ are algebraically independent in $K[x,y]$ and $\cent_{P}(f) \cap \cent_{P}(g) = K$.
\item $P$ is generated, as a Lie algebra, by $\{x, x^{3},y^{2}\}$.
\ee
\end{lem}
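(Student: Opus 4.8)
The plan is to reduce almost everything to the single computational identity
$$
\{x^{a}y^{b},x^{c}y^{d}\} = (ad-bc)\,x^{a+c-1}y^{b+d-1},
$$
valid for all $a,b,c,d\ge 0$ (the coefficient $ad-bc$ automatically vanishes whenever a right-hand exponent would be negative), which drops straight out of the definition of the bracket. Parts (a) and (c) are then immediate. For (a), since $\{h,x\}=-h_{y}$ and $\{h,y\}=h_{x}$, an element $h$ is central iff $h_{x}=h_{y}=0$, i.e. $h\in K$, and constants are clearly central. For (c), recall $\{f,g\}=j(f,g)$; algebraically dependent polynomials have vanishing Jacobian determinant, so $\{f,g\}\neq 0$ forces algebraic independence. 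For the centralizer claim, any $z\in\cent_{P}(f)\cap\cent_{P}(g)$ satisfies the homogeneous linear system $z_{x}f_{y}-z_{y}f_{x}=0$, $z_{x}g_{y}-z_{y}g_{x}=0$, whose coefficient determinant is $\pm\{f,g\}\neq 0$; over the fraction field $K(x,y)$ this forces $z_{x}=z_{y}=0$, hence $z\in K$.

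For (b) the idea is to pass from the bracket to derivations. Since $\{f,g\}=D_{f}(g)$, the hypothesis $\{f,g\}=0$ says exactly that $g\in\ker D_{f}$, and trivially $f\in\ker D_{f}$ as well. If $D_{f}=0$ then $f\in K$ and one takes $h=g$; otherwise $D_{f}$ is a nonzero derivation of $K[x,y]$, and I would invoke the structure theorem for rings of constants of derivations of the polynomial ring in two variables (\cite{NoNa1988Rings-of-constants}), giving $\ker D_{f}=K[h]$ for a suitable $h$. Then both $f$ and $g$ lie in $K[h]$, as required.

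The substantive part is (d). The plan is to prove, by induction on the total degree $N$, that every monomial $x^{m}y^{n}$ lies in the Lie subalgebra $S$ generated by $\{x,x^{3},y^{2}\}$; since the monomials span $P$ this gives $S=P$. The low-degree cases are done by hand: $\{x,y^{2}\}=2y$ produces $y$, then $\{x,y\}=1$, $\{x^{3},y\}=3x^{2}$, $\{x^{2},y^{2}\}=4xy$, and a handful of further brackets yield all monomials of degree $\le 3$. For the inductive step I would use the displayed identity to write a target monomial of degree $N+1$ as a bracket of two monomials of degree $\le N$ whose degrees sum to $N+3$, exploiting the degree-raising feature $\deg\{u,v\}=\deg u+\deg v-2$ for monomials: when $m\ge 2$ one realizes $x^{m}y^{n}$ as a scalar multiple of $\{x^{3},x^{m-2}y^{n+1}\}$, while the boundary cases $m=1$ and $m=0$ are handled by $\{x^{2}y,y^{n}\}$ and $\{xy^{2},y^{n-1}\}$ respectively.

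I expect the main obstacle to be exactly this last step: for every target monomial one must choose a bracket whose coefficient $ad-bc$ is nonzero while keeping both factors within the inductive range $\deg\le N$. The delicacy is bookkeeping the cases where the coefficient would vanish — this is precisely why $m=0$ and $m=1$ require their own brackets rather than the generic $\{x^{3},\cdot\}$ — after which the verification of each bracket via the monomial identity is routine.
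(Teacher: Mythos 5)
Your proposal is correct, and for parts (a), (c) and (d) it is in substance the paper's own argument: (a) is the same triviality; for the centralizer in (c) you solve the $2\times 2$ linear system $\{z,f\}=\{z,g\}=0$ by Cramer's rule over $K(x,y)$, where the paper instead observes that $D_z$ vanishes on $K[f,g]$ and hence on the algebraic extension $K[x,y]$ --- the two arguments are interchangeable; and your induction for (d) (on total degree, peeling off $\{x^3,\cdot\}$ for $m\ge 2$ and using $\{x^2y,y^n\}$, $\{xy^2,y^{n-1}\}$ in the boundary cases, all with nonzero coefficients $3(n+1)$, $2n$, $n-1$ in the relevant ranges) is a legitimate repackaging of the paper's two-step induction via $\{x^n,x^2y\}=nx^{n+1}$ and $\{x^ry^s,y^2\}=2rx^{r-1}y^{s+1}$. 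The one genuinely different route is (b): the paper argues geometrically, noting that $\overline{(f,g)(\Atwo)}$ is an irreducible rational curve and factoring $(f,g)$ through its normalization $\Aone\to C$, whereas you observe $f,g\in\ker D_f$ and invoke the Nowicki--Nagata structure theorem $\ker D_f=K[h]$. Your version is arguably cleaner and more uniform, since it leans on exactly the reference \cite{NoNa1988Rings-of-constants} that the paper itself quotes in Lemma~\ref{derivation.lem}(a); the paper's geometric proof has the advantage of being self-contained modulo L\"uroth and normalization. The only point worth making explicit in your write-up is the degenerate case $f\in K$ (so that $D_f=0$), which you do handle by taking $h=g$.
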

\begin{proof} (a) is easy and left to the reader. 

(b) Consider the morphism $\alpha=(f,g)\colon \Atwo \to \Atwo$. Then $C:=\overline{\alpha(\Atwo)} \subset \Atwo$ is an irreducible rational curve, and we have a factorization
$$
\begin{CD}
\alpha\colon \Atwo @>h>> \Aone @>\eta>> C \subset \Atwo
\end{CD}
$$
where $\eta$ is the normalization of $C$. It follows that $f,g \in K[h]$.

(c) It is clear that $f,g$ are algebraically independent, i.e. $\tdeg_{K}K(f,g) = 2$. Equivalently, $K(x,y) / K(f,g)$ is a finite algebraic extension. Now assume that $\{h,f\}=\{h,g\}=0$. Then the derivation $D_{h}$ vanishes on $K[f,g]$, hence on $K[x,y]$. Thus $D_{h}=0$ and so  $h\in K$.

(d) Denote by $P_{d}:=K[x,y]_{d}$ the homogeneous part of degree $d$. Let $L \subset P$ be the Lie subalgebra generated by $\{x, x^{3},y^{2}\}$. We first use the equations
$$
\{x,y\} = 1, \ \{x,y^{2}\}= 2y, \ \{x^{3},y\} = 3x^{2}, \  \{x^{2},y^{2}\}=4xy, \ \{x^{3},y^{2}\}=6x^{2}y
$$
to show that $K\oplus P_{1}\oplus P_{2} \subset L$ and that $x^{2}y\in L$. Now the claim follows by induction from the relations
$$
\{x^{n},x^{2}y\}=nx^{n+1} \text{ \ and \ } \{x^{r}y^{s},y^{2}\}= 2rx^{r-1}y^{s+1}.
$$
\end{proof}

\subsection*{Divergent}
The next lemma should also be known. Recall that the {\it divergence\/}  $\Div D$ of a vector field $D = p \dx + q \dy\in\VF(\Atwo)$ is defined  by $\Div D := p_{x}+q_{y} \in K[x,y]$. 
\begin{lem}\lab{derivation.lem}
Let $D$ be a non-trivial derivation of $K[x,y]$.
\be
\item 
The kernel $K[x,y]^{D}$ is either $K$ or $K[f]$ for some $f\in K[x,y]$.
\item
If $\Div D =0$, then $D=D_{h}$ for some $h\in K[x,y]$.
\ee
Now assume that $D = D_{f}$ for some non-constant $f\in K[x,y]$ and that $D(g)=1$ for some $g\in K[x,y]$.
\be\setcounter{enumi}{2}
\item
Then $K[x,y]^{D}= K[f]$.
\item 
If $D$ is locally nilpotent, then $K[x,y]=K[f,g]$.
\ee
\end{lem}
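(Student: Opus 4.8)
The plan is to establish (b) and (a) first, and then deduce (c) from (a) and (d) from (c).

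For (b), write $D = p\partial_x + q\partial_y$. Since $D_h = h_x\partial_y - h_y\partial_x$, the identity $D = D_h$ is equivalent to solving the system $h_x = q$, $h_y = -p$. The cross-derivative (integrability) condition for this system is $q_y = -p_x$, i.e. $p_x + q_y = \Div D = 0$, which is precisely the hypothesis. I would then exhibit $h$ explicitly, e.g. $h(x,y) := \int_0^x q(t,y)\,dt - \int_0^y p(0,s)\,ds$, which is a polynomial because $p,q$ are, and verify directly that $h_x = q$ and, using $\Div D = 0$, that $h_y = -p$. This is nothing but the Poincar\'e lemma for polynomial $1$-forms.

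For (a), set $A := K[x,y]^D$. The first step is to bound its transcendence degree. If $A$ contained two algebraically independent elements $u,v$, then $K(x,y)/K(u,v)$ would be a finite, hence (in characteristic zero) separable, algebraic extension; the unique extension of $D$ to $K(x,y)$ kills $u,v$, so it vanishes on $K(u,v)$ and therefore on all of $K(x,y)$, forcing $D=0$, a contradiction. Thus $\tdeg_K A \le 1$. If $\tdeg_K A = 0$, every element of $A$ is algebraic over $K$, so $A = K$ since $K$ is algebraically closed and $K[x,y]$ is a domain. If $\tdeg_K A = 1$, I would invoke that the kernel of a derivation in characteristic zero is factorially closed in $K[x,y]$, and that a factorially closed $K$-subalgebra of $K[x,y]$ of transcendence degree one has the form $K[f]$; this is the structural result of \cite{NoNa1988Rings-of-constants}, and yields $A = K[f]$.

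For (c), observe first that $D_f(f) = \{f,f\} = 0$, so $f \in K[x,y]^D$, and since $f$ is non-constant, $K[x,y]^D \neq K$. By (a) we may write $K[x,y]^D = K[\phi]$ with $\phi$ non-constant, and then $f = P(\phi)$ for some $P \in K[t]$. The key computation is $D_{P(\phi)} = P'(\phi)\,D_\phi$, which is immediate from the chain rule since $(P(\phi))_x = P'(\phi)\phi_x$ and likewise in $y$. Hence $1 = D(g) = D_f(g) = P'(\phi)\cdot D_\phi(g)$ in the domain $K[x,y]$, which forces both factors to be units, so $P'(\phi) \in K^{*}$. As $\phi$ is transcendental over $K$, this means $\deg P = 1$, whence $f = a\phi + b$ with $a \in K^{*}$ and $K[f] = K[\phi] = K[x,y]^D$. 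For (d), with $D$ locally nilpotent and $D(g)=1$ I would run the standard slice argument: for $h \in K[x,y]$ let $n$ be minimal with $D^{n+1}h = 0$; then $D^n h \in K[x,y]^D$, and $h - \tfrac{1}{n!}(D^n h)\,g^n$ is annihilated by $D^n$ (using $D^n(g^n) = n!$), hence has strictly smaller $D$-degree. By induction every $h$ lies in $K[x,y]^D[g]$, so $K[x,y] = K[x,y]^D[g]$; combined with (c) this gives $K[x,y] = K[f][g] = K[f,g]$.

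The main obstacle is the structural assertion in (a) that a transcendence-degree-one kernel equals $K[f]$: this rests on the factorial closedness of $K[x,y]^D$ and the classification of such subalgebras, and is the only genuinely non-elementary ingredient. Once (a) is granted, the transcendence-degree reduction, the unit argument in (c), and the slice induction in (d) are all routine.
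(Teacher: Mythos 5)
Your proposal is correct, and parts (a)--(c) follow essentially the same route as the paper: (a) is delegated to the structure theorem of Nowicki--Nagata (you add the standard transcendence-degree reduction showing $\tdeg_K K[x,y]^D\le 1$, which the paper leaves implicit in the citation), (b) is the polynomial Poincar\'e lemma with an explicit antiderivative, and (c) is exactly the paper's computation $D_{F(h)}=F'(h)D_h$ forcing $F$ to be linear. The only genuine divergence is in (d): the paper integrates the locally nilpotent derivation $D$ to a $\mathbb{G}_a$-action on $\Atwo$, views $g$ as giving a $\mathbb{G}_a$-equivariant retraction $\Atwo\to \mathbb{G}_a$, and quotes the general fact $\mathcal{O}(X)=\phi^*(\mathcal{O}(G))\otimes \mathcal{O}(X)^G$; you instead run the elementary slice induction, subtracting $\tfrac{1}{n!}(D^nh)g^n$ to lower the $D$-degree and concluding $K[x,y]=K[x,y]^D[g]$ directly. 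Both arguments are standard proofs of the slice theorem; yours is self-contained and avoids the group-action machinery, while the paper's is shorter at the cost of invoking an external structural result. One small point of care in your (d): the identity $D^n\bigl((D^nh)g^n\bigr)=(D^nh)\,D^n(g^n)$ uses that $D^nh\in\ker D$, which you have, so the induction closes correctly.
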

\begin{proof}
(a) See \cite{NoNa1988Rings-of-constants} Theorem 2.8.

(b) Let $D = f \px + g \py$, then $\Div D = f_x + g_y=0$ implies that there exist $h \in K[x,y]$ such that $f = h_y$, $g = -h_x$.

(c) It is obvious that $\ker(D) \supset K[f]$, hence by (a) one has $\ker(D) = K[h] \supset K[f]$. Thus $f = F(h)$ for some $F \in K[t]$
and then $D_f(g) = D_{F(h)}(g) = F'(h) D_h(g) = 1$ which implies $F$ is linear.

(d) Let $G$ be an affine algebraic group, $X$ be an affine variety and $\phi: X \to G$ be a $G$-equivariant retraction. Then there is the general fact which says that $O(X) = \phi^*(O(G)) \otimes O(X)^G$. In our case $O(\Atwo) =O(G) \otimes O(\Atwo)^G = K[g] \otimes K[f]$.
\end{proof}
It follows from Lemma~\ref{derivation.lem}(b) above that the image of 
$\mu\colon P \to \VF(\Atwo)$, $h\mapsto D_{h}$, is 
$\mu(P) = \VFzero(\Atwo):=\{D\in\VF(\Atwo)\mid \Div D  = 0\}$. We will also discuss the Lie subalgebra $\VFd(\Atwo):=\{D\in\VF(\Atwo)\mid \Div D  \in K\}$.

\subsection*{Automorphisms of the Poisson algebra}
Denote by $\AutLA(P)$ the group of Lie algebra automorphisms of $P$. There is a canonical homomorphism
$$
p\colon \AutLA(P) \to K^{*}, \quad \phi \mapsto \phi(1),
$$
which has a section $s \colon K^{*}\to \AutLA(P)$ given by $s(t)|_{K[x,y]_{n}}:= t^{1-n} \id_{K[x,y]_{n}}$.
Thus $\AutLA(P)$ is a semidirect product $\AutLA(P) = \SAutLA(P) \rtimes K^{*}$ where 
$$
\SAutLA(P) :=\ker p = \{\phi\mid \phi(1)=1\}.
$$
\begin{lem}\lab{lem2}
Every automorphism $\phi\in\AutLA(P)$ is determined by $\phi(1)$, $\phi(x)$  and $\phi(y)$, and $K[x,y] = K[\phi(x),\phi(y)]$.
\end{lem}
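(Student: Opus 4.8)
The plan is to set $u:=\phi(x)$, $v:=\phi(y)$ and $c:=\phi(1)$, and to reconstruct $\phi$ on all of $P$ from this data by induction on degree. Since $\{x,y\}=1$ and $\phi$ is a Lie homomorphism, $\{u,v\}=\phi(1)=c$; since $\phi$ preserves the center $K$ (Lemma~\ref{lem1}(a)) and is a $K$-linear bijection, $c\in K^{*}$. In particular $\{u,v\}\neq 0$, so by Lemma~\ref{lem1}(c) the elements $u,v$ are algebraically independent, $K[u,v]$ is a polynomial ring, and $\cent_{P}(u)\cap\cent_{P}(v)=K$. The key structural fact I would record first is that, because the Poisson bracket is a derivation in each slot, on the subalgebra $K[u,v]$ one has $\{u,w\}=c\,w_{v}$ and $\{v,w\}=-c\,w_{u}$ (derivatives taken in the coordinates $u,v$); thus the inner derivations $\{u,\,\cdot\,\}$ and $\{v,\,\cdot\,\}$ act on $K[u,v]$ as $c\,\partial_{v}$ and $-c\,\partial_{u}$.

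Next I would run the induction, the base case being the constants ($\phi(\lambda)=\lambda c$) together with $x,y$. The identities $\{x,f\}=f_{y}$ and $\{y,f\}=-f_{x}$ show that bracketing with $x,y$ lowers degree, so applying $\phi$ gives $\{u,\phi(f)\}=\phi(f_{y})$ and $\{v,\phi(f)\}=-\phi(f_{x})$, and by the inductive hypothesis the right-hand sides are known elements of $K[u,v]$. Writing $A:=\{u,\phi(f)\}$ and $B:=\{v,\phi(f)\}$, the Jacobi identity yields $\{u,B\}-\{v,A\}=\{\{u,v\},\phi(f)\}=\{c,\phi(f)\}=0$, which is exactly the integrability condition $\partial_{u}A+\partial_{v}B=0$ for the system $c\,(w_{0})_{v}=A$, $-c\,(w_{0})_{u}=B$. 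In characteristic zero a closed polynomial $1$-form is exact, so one integrates to get a solution $w_{0}\in K[u,v]$, and any two solutions differ by an element of $\cent_{P}(u)\cap\cent_{P}(v)=K$. Hence $\phi(f)=w_{0}+e_{f}$ with $w_{0}\in K[u,v]$ determined by the lower-degree data and $e_{f}\in K$. This already shows $\phi(f)\in K[u,v]$ and that $\phi$ is determined up to the additive constants $e_{f}$.

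The main obstacle is pinning down these constants, since the center $K$ lies in the kernel of every inner derivation and so is invisible to the recursion above. Here I would use the semisimple inner derivation $\{xy,\,\cdot\,\}$, which acts on the monomial $x^{p}y^{q}$ with eigenvalue $q-p$: for $p\neq q$ one has $\{xy,x^{p}y^{q}\}=(q-p)\,x^{p}y^{q}$, and applying $\phi$ (with $\phi(xy)$ already known by induction, as $\deg(xy)=2$) kills $e_{f}$ on the bracket side while it reappears as $(q-p)\,e_{f}$ on the other, forcing its value. For the balanced monomials $x^{p}y^{p}$ the eigenvalue vanishes, so instead I would exhibit them as honest brackets of strictly lower-degree elements, e.g. $\{x^{p}y,\,xy^{p}\}=(p^{2}-1)\,x^{p}y^{p}$ for $p\ge2$; both factors have degree $p+1<2p$ and are thus already determined, so the bracket pins $\phi(x^{p}y^{p})$ with no constant left over. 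The finitely many small degrees (e.g. $\{x^{2},y^{2}\}=4xy$, $\{x^{2},xy\}=2x^{2}$, $\{xy,y^{2}\}=2y^{2}$) are checked by hand. This completes the determination of $\phi$ from $(\phi(1),\phi(x),\phi(y))$, which is the first assertion.

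Finally, the equality $K[x,y]=K[u,v]$ looks at first sight like the Jacobian Conjecture, since it asks that the pair $u,v$ with constant Jacobian $\{u,v\}=c$ generate all of $K[x,y]$. The point that circumvents this is that $\phi$ is not merely a pair of polynomials with constant Jacobian but a \emph{surjective} Lie automorphism of $P$: the induction above shows $\phi(P)\subseteq K[u,v]$, and surjectivity of $\phi$ then gives $K[x,y]=\phi(P)\subseteq K[u,v]\subseteq K[x,y]$, whence $K[u,v]=K[x,y]$. I expect the constant-pinning step, and the bookkeeping for the small base cases of the induction, to be the only delicate points; everything else is the formal recursion together with the two structural facts from Lemma~\ref{lem1}.
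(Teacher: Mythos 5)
Your argument is correct, but it follows a genuinely different route from the paper's. The paper first normalizes to $\phi(1)=1$ using the section $s\colon K^{*}\to\AutLA(P)$, then proves the multiplicativity relations $\phi(x^{n})=\phi(x)^{n}$ and $\phi(y^{n})=\phi(y)^{n}$ by an induction that exploits the locally nilpotent derivation $D_{\phi(y)}$ (via Lemma~\ref{derivation.lem}(c)--(d), which also delivers $K[\phi(x),\phi(y)]=K[x,y]$ as a slice-theorem statement), and finally invokes the generation of $P$ by $\{x,x^{3},y^{2}\}$ from Lemma~\ref{lem1}(d). You instead reconstruct $\phi(f)$ degree by degree by integrating the system $\{u,\phi(f)\}=\phi(f_{y})$, $\{v,\phi(f)\}=-\phi(f_{x})$ in the coordinates $u,v$, with the Jacobi identity supplying exactly the integrability condition, and you resolve the residual central ambiguity with the semisimple element $xy$ (eigenvalue $q-p$ on $x^{p}y^{q}$, plus the identity $\{x^{p}y,xy^{p}\}=(p^{2}-1)x^{p}y^{p}$ for the balanced monomials and the hand-checked degree-$2$ relations, which do close up as you indicate: $\{x^{2},y^{2}\}=4xy$ pins $\phi(xy)$ because the constant ambiguities in $\phi(x^{2}),\phi(y^{2})$ are central, and then the eigenvalue relations pin $\phi(x^{2}),\phi(y^{2})$). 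Your derivation of $K[x,y]=K[u,v]$ from surjectivity of $\phi$ together with the containment $\phi(P)\subseteq K[u,v]$ is a clean alternative to the paper's use of Lemma~\ref{derivation.lem}(d); what your approach buys is independence from both the generation statement Lemma~\ref{lem1}(d) and the locally-nilpotent-derivation machinery, at the cost of the polynomial Poincar\'e lemma and noticeably more bookkeeping in pinning down the additive constants. The one point worth writing out more carefully if you formalize this is precisely that constant-pinning step, since the recursion alone only determines $\phi$ modulo the center; but the relations you cite do suffice.
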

\begin{proof} 
Replacing $\phi$ by the composition $\phi\circ s(\phi(1)^{-1})$ we can assume that $\phi(1)=1$. 

We will show that $\phi(x^{n}) = \phi(x)^{n}$ and $\phi(y^{n}) = \phi(y)^{n}$ for all $n\geq 0$. Then the first claim follows from Lemma~\ref{lem1}(d).

By induction, we can assume that $\phi(x^{j}) = \phi(x)^{j}$ for $j <n$. We have $\{x^{n},y\}=nx^{n-1}$ and so $\{\phi(x^{n}),\phi(y)\}=n\phi(x^{n-1}) = n\phi(x)^{n-1}$. On the other hand, we get $\{\phi(x)^{n},\phi(y)\} = n\phi(x)^{n-1}\{\phi(x),\phi(y)\} = n\phi(x)^{n-1}$, hence the difference $h:=\phi(x^{n})-\phi(x)^{n}$ belongs to the kernel of the derivation $D_{\phi(y)}\colon f \mapsto \{f,\phi(y)\}$. Since $D_{\phi(y)}$ is locally nilpotent, we get from Lemma~\ref{derivation.lem}(c)--(d) that $\ker D_{\phi(y)} = K[\phi(y)]$ and that $K[\phi(x),\phi(y)] = K[x,y]$. This already proves the second claim and shows that $h$ is a polynomial in $\phi(y)$. 

Since $\{\phi(x^{n}),\phi(x)\} = \phi(\{x^{n},x\})=0$ and $\{\phi(x)^{n},\phi(x)\}=n\phi(x)^{n-1}\{\phi(x),\phi(x)\}$ we get $\{h,\phi(x)\}=0$ which implies that $h\in K$.

In the same way, using $\{x,xy\} = x$ and $\{y,xy\}=-y$, we find $\phi(xy)-\phi(x)\phi(y) \in K$. Hence
$$
n\phi(x^{n}) = \{\phi(x^{n}),\phi(xy)\}=\{\phi(x)^{n},\phi(x)\phi(y)\}=n\phi(x)^{n},
$$
and  so $\phi(x^{n}) = \phi(x)^{n}$. By symmetry, we also get $\phi(y^{n}) = \phi(y)^{n}$. 
\end{proof}

\subsection*{Automorphisms of affine 2-space}
Denote by $\AutK$ the group of $K$-algebra automorphisms of $K[x,y]$. For $\alpha\in\AutK$ we will use the notation $\alpha=(f,g)$ in case $\alpha(x)=f$ and $\alpha(y)=g$, which implies that  $K[x,y]=K[f,g]$. There is a homomorphism
$$
j\colon \AutK \to K^{*}, \quad \alpha\mapsto j(\alpha):= j(\alpha(x),\alpha(y)) = \det \Jac(\alpha(x),\alpha(y))
$$
which has a section $\sigma\colon t\mapsto (tx,ty)$. Hence, $\AutK$ is a semidirect product
$\AutK = \SAutK \rtimes K^{*}$ where 
$$
\SAutK :=\ker j = \{\alpha=(f,g)\mid j(f,g)=1\}.
$$
We can consider $\AutK$  and $\AutLA(P)$ as subgroups of the $K$-linear automorphisms $\GL(K[x,y])$.
\begin{lem}\lab{aut.lem}
As subgroups of $\GL(K[x,y])$ we have $\SAutLA(P) = \SAutK$.
\end{lem}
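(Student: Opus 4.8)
The plan is to prove the two inclusions separately, using the chain rule for the Jacobian determinant as the only computational ingredient and bootstrapping the second inclusion off the first together with the uniqueness in Lemma~\ref{lem2}. For $\SAutK\subseteq\SAutLA(P)$, I would take $\alpha=(F,G)\in\SAutK$ and note that, being a $K$-algebra automorphism, it fixes the unit, $\alpha(1)=1$; it then remains only to check bracket-preservation. Since $\alpha$ acts by substitution, $\alpha(f)=f(F,G)$, the chain rule $\Jac(f(F,G),g(F,G))=\Jac(f,g)|_{(F,G)}\cdot\Jac(F,G)$ gives, on taking determinants,
$$
\{\alpha(f),\alpha(g)\}=\alpha\big(\{f,g\}\big)\cdot j(\alpha).
$$
As $j(\alpha)=1$, this says precisely that $\alpha$ is a Lie-algebra automorphism, hence $\alpha\in\SAutLA(P)$.

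For the reverse inclusion I would start from $\phi\in\SAutLA(P)$ and manufacture a companion $K$-algebra map $\alpha$ by $\alpha(x)=\phi(x)$, $\alpha(y)=\phi(y)$. By Lemma~\ref{lem2} one has $K[x,y]=K[\phi(x),\phi(y)]$, so $\alpha$ is a genuine $K$-algebra automorphism, and its Jacobian is $j(\alpha)=\{\phi(x),\phi(y)\}=\phi(\{x,y\})=\phi(1)=1$, whence $\alpha\in\SAutK$. By the first inclusion $\alpha$ is then also a Lie-algebra automorphism, and since $\phi$ and $\alpha$ agree on $1$, $x$, $y$, the determinacy part of Lemma~\ref{lem2} forces $\phi=\alpha\in\SAutK$.

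I do not expect a genuine obstacle: the argument turns entirely on the observation that the chain-rule multiplier is exactly $j(\alpha)$, so the determinant-$1$ condition coincides with bracket-preservation. The one point demanding care is to avoid checking multiplicativity of the abstract Lie automorphism $\phi$ by hand; instead the second inclusion offloads this onto Lemma~\ref{lem2} by comparing $\phi$ with the algebra automorphism it determines.
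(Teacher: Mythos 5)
Your proposal is correct and follows essentially the same route as the paper: the forward inclusion via the chain-rule identity $\{\alpha(f),\alpha(g)\}=\alpha(\{f,g\})\,j(\alpha)$, and the reverse inclusion by building the companion algebra automorphism from $\phi(x),\phi(y)$ and invoking Lemma~\ref{lem2} both for $K[\phi(x),\phi(y)]=K[x,y]$ and for the determinacy argument forcing $\phi=\alpha$. No gaps.
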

\begin{proof}
(a) Let $\alpha$ be an endomorphism of $K[x,y]$ and put $\Jac(\alpha):=\Jac(\alpha(x),\alpha(y))$. For any  $f,g\in K[x,y]$ we have $\Jac(\alpha(f),\alpha(g)) = \alpha(\Jac(f,g)) \Jac(\alpha)$, because
\begin{multline*}
\frac{\partial}{\partial x}\alpha(f) = \frac{\partial f}{\partial x}(\alpha (x),\alpha(y)) \frac{\partial\alpha(x)}{\partial x}
+\frac{\partial f}{\partial y}(\alpha (x),\alpha(y))\frac{\partial\alpha(y)}{\partial x} \\  
= \alpha(\frac{\partial f}{\partial x})\frac{\partial\alpha(x)}{\partial x} + \alpha(\frac{\partial f}{\partial y})\frac{\partial\alpha(y)}{\partial x}.
\end{multline*}
It follows that $\{\alpha(f),\alpha(g)\} = \alpha(\{f,g\}) j(\alpha)$. This shows that $\SAutK \subset \SAutLA(P)$.

(b) Now let $\phi\in\SAutLA(P)$. Then $j(\phi(x),\phi(y)) = \{\phi(x),\phi(y)\} = \phi(1)=1$ and, by Lemma~\ref{lem2}, $K[\phi(x),\phi(y)] = K[x,y]$. Hence, we can define an automorphism $\alpha\in\SAutK$ by $\alpha(x):=\phi(x)$ and $\alpha(y) := \phi(y)$. From (a) we see that $\alpha\in\SAutLA(P)$, and from Lemma~\ref{lem2} we get $\phi=\alpha$, hence $\phi \in\SAutK$.
\end{proof}

\begin{rem}\lab{homP.rem}
The first part of the proof above shows the following. If $f,g\in P$ are such that $\{f,g\} = 1$, then the $K$-algebra homomorphism defined by $x\mapsto f$ and $y \mapsto g$ is an injective homomorphism of $P$ as a Lie algebra. (Injectivity follows, because $f,g$ are algebraically independent.)
\end{rem}

\subsection*{Lie subalgebras of $P$}
The subspace 
$$
P_{\leq 2} := K\oplus P_{1}\oplus P_{2} = K \oplus Kx \oplus Ky \oplus Kx^{2} \oplus Kxy \oplus Ky^{2} \subset P
$$ 
is a Lie subalgebra. 
This can be deduced from the following Lie brackets which we note here for later use.
\begin{gather}
\{x^{2},xy\} = 2x^{2}, \ \{x^{2},y^{2}\}=4xy, \ \{y^{2},xy\}= -2y^{2};\\
\{x^{2},x\}=0, \ \{xy,x\}=-x, \ \{y^{2},x\}=-2y, \\ \{x^{2},y\}=2x, \ \{xy,y\}=y, \ \{y^{2},y\}=0;\\
\{x,y\}=1.
\end{gather}
For example, $P_{2}=Kx^{2}\oplus Kxy \oplus Ky^{2}$ is a Lie subalgebra of $\Ptwo$ isomorphic to $\sltwo$, and $P_{1}=Kx \oplus Ky$ is the two-dimensional simple $P_{2}$-module.

From Remark~\ref{homP.rem} we get the following lemma.
\begin{lem} 
Let $f,g\in K[x,y]$ such that $\{f,g\}=1$. Then $\langle 1,f,g,f^{2},fg,g^{2}\rangle \subset P$ is a Lie subalgebra isomorphic to $P_{\leq 2}$. An isomorphism is induced from the $K$-algebra homomorphism $P \to P$ defined by $x\mapsto f, y\mapsto g$.
\end{lem}
\begin{defn} For $f,g \in K[x,y]$ such that $\{f,g\} \in K^{*}$ we put 
$$
P_{f,g}:= \langle 1,f,g,f^{2},fg,g^{2}\rangle \subset P.
$$
We have just seen that this is a Lie algebra isomorphic to $\Ptwo$. Clearly, $P_{f,g}=P_{f_{1},g_{1}}$ if $\langle 1,f,g\rangle = \langle 1,f_{1},g_{1}\rangle$. Denoting by $\rad L$ the solvable radical of the Lie algebra $L$ we get
$$
\rad P_{f,g} = \langle 1, f, g\rangle \text{ \ and \ } P_{f,g}/\rad P_{f,g} \simeq \sltwo.
$$
\end{defn}

\begin{prop}\lab{subP.prop}
Let $Q \subset P$ be a Lie subalgebra isomorphic to $\Ptwo$. Then $K \subset Q$, and  
$Q = P_{f,g}$ for every pair $f,g\in L$ such that $\langle 1, f, g\rangle=\rad Q$. In particular, $\{f,g\}\in K^{*}$. 
\end{prop}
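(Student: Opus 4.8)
The plan is to reduce everything to the abstract Lie-algebra structure of $\Ptwo$ together with the centralizer statement in Lemma~\ref{lem1}(c). First I would record the internal structure of $\Ptwo$ as an abstract Lie algebra: it is perfect, its solvable radical is the three-dimensional Heisenberg algebra $\langle 1,x,y\rangle$, the derived ideal of that radical is the line $K\cdot 1$, and the center of the whole algebra coincides with this line (because the Levi part $P_2\cong\sltwo$ acts on $P_1=\langle x,y\rangle$ as the standard module, which has no invariants). Transporting these purely algebraic facts through the given isomorphism $Q\cong\Ptwo$, I obtain that $\rad Q$ is a three-dimensional Heisenberg algebra and that $\cent(Q)=\cent(\rad Q)=[\rad Q,\rad Q]$ is a line $Kz$ with $z\neq 0$.

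Next I would prove $K\subset Q$ by showing $z\in K$. Choose $u,v\in\rad Q$ spanning $\rad Q$ together with $z$; since $\rad Q$ is non-abelian Heisenberg, $\{u,v\}$ is a nonzero multiple of $z$, while $\{z,u\}=\{z,v\}=0$ because $z$ is central in $\rad Q$. As $\{u,v\}\neq 0$, Lemma~\ref{lem1}(c) applies and gives $\cent_P(u)\cap\cent_P(v)=K$; since $z$ lies in this intersection, $z\in K$, so $K=Kz=\cent(Q)\subset Q$. With $1\in Q$ in hand and $\cent(Q)=K\subset\rad Q$, any $f,g\in Q$ with $\langle 1,f,g\rangle=\rad Q$ satisfy $\{f,g\}\in[\rad Q,\rad Q]=K$, and this bracket is nonzero (otherwise $\rad Q$ would be abelian); hence $\{f,g\}\in K^{*}$, which is the ``in particular'' assertion.

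For the identity $Q=P_{f,g}$ I would argue that both sides equal the normalizer $N:=\{q\in P\mid \{q,R\}\subseteq R\}$ of the radical $R:=\rad Q=\langle 1,f,g\rangle$. After rescaling $f$ so that $\{f,g\}=1$ (which leaves $P_{f,g}$ unchanged, since it depends only on $\langle 1,f,g\rangle$), I note that both $Q$ and $P_{f,g}$ have $R$ as an ideal, so both are contained in $N$, and both have dimension $6$. It therefore suffices to prove $\dim N\le 6$. For this, consider the map $q\mapsto\{q,\cdot\}|_R$ from $N$ to $\Der(R)$. Its kernel is $\cent_P(R)=\cent_P(f)\cap\cent_P(g)=K$ by Lemma~\ref{lem1}(c), and its image lies in $\{D\in\Der(R)\mid D(1)=0\}$ since $1$ is central in $P$. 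A direct computation with the single Heisenberg relation $\{f,g\}=1$ shows that this space of derivations is five-dimensional: writing $D$ on the basis $1,f,g$, the derivation condition applied to $\{f,g\}=1$ forces exactly the trace relation on the action on $\langle f,g\rangle$, leaving five free parameters. Hence $\dim N\le 1+5=6$, forcing $Q=N=P_{f,g}$.

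The main obstacle is conceptual rather than computational: one is tempted to normalize $f,g$ to the coordinates $x,y$, but this is illegitimate here, since $\{f,g\}\in K^{*}$ only makes $(f,g)$ an \'etale map, and the assertion that it is a genuine polynomial automorphism is exactly the two-dimensional Jacobian Conjecture under study. The argument must therefore remain coordinate-free, extracting $Q$ intrinsically from its radical $R$ via the normalizer/centralizer computation, with Lemma~\ref{lem1}(c) doing the essential work of pinning down $\cent_P(R)=K$.
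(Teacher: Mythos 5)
Your proof is correct, and the second half takes a genuinely different route from the paper. The first step is essentially the same: both you and the paper pin down the center of $Q$ by applying Lemma~\ref{lem1}(c) to a non-commuting pair inside $Q$, concluding $\cent(Q)=K\subset Q$; your variant of running this through the Heisenberg structure of $\rad Q$ also delivers the ``in particular'' claim $\{f,g\}\in K^{*}$ cleanly. For the identification $Q=P_{f,g}$, the paper instead fixes an isomorphism $\phi\colon \Ptwo\simto Q$ normalized so that $\phi(1)=1$, sets $f:=\phi(x)$, $g:=\phi(y)$, and checks by direct bracket computations (using Lemma~\ref{lem1}(c) once more to absorb central discrepancies) that $\phi(xy)=fg+c$, $\phi(x^{2})=f^{2}+d$, $\phi(y^{2})=g^{2}+e$, whence $P_{f,g}\subset Q$ and equality follows by dimension. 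You replace this by showing that both $Q$ and $P_{f,g}$ lie in the normalizer $N$ of $R=\langle 1,f,g\rangle$ in $P$ and that $\dim N\le 6$, via the injection of $N/\cent_{P}(R)$ into the space of derivations of the Heisenberg algebra $R$ killing $1$ (which is indeed $5$-dimensional: the only constraint on the six matrix entries comes from differentiating $\{f,g\}=1$, giving the trace condition), together with $\cent_{P}(R)=K$ from Lemma~\ref{lem1}(c). Your argument is more intrinsic and avoids touching the Levi part at all, at the cost of the small derivation-space computation; the paper's version has the mild benefit of exhibiting the isomorphism $\Ptwo\to Q$ concretely as the substitution $x\mapsto f$, $y\mapsto g$ up to central constants. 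Both proofs lean on Lemma~\ref{lem1}(c) in the same essential way, and your closing caution --- that one may not normalize $(f,g)$ to $(x,y)$ because that would presuppose the Jacobian Conjecture --- is exactly the right point.
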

\begin{proof}
We first show that $\cent(Q) =K$. In fact, $Q$ contains elements $f,g$ such that $\{f,g\}\neq 0$. If $h\in\cent(Q)$, then $h \in \cent_{P}(f)\cap \cent_{P}(g) = K$, by Lemma~\ref{lem1}(c).

Now choose an isomorphism $\phi\colon \Ptwo \simto Q$. Then $\phi(K)=K$, and replacing $\phi$ by $\phi\circ s(t)$ with a suitable $t\in K^{*}$ we can assume that $\phi(1)=1$. Setting $f:=\phi(x), g:=\phi(y)$ we get $\{f,g\} = 1$, and putting $f_{0}:=\phi(x^{2}), f_{1}:=\phi(xy), f_{2}:=\phi(y^{2})$ we find
$$
\{f_{1},f\} = \phi \{xy,x\}=\phi(-x)=-f = \{fg,f\}.
$$
Similarly, $\{f_{1},g\}=\{fg,g\}$, hence $fg = f_{1}+ c \in Q$, by Lemma~\ref{lem1}(c). Next we have
$$
\{f_{0},f\} =0 \text{ \ and \ } \{f_{0},g\}=\phi(\{x^{2},y\})=\phi(2x)=2f = \{f^{2},g\}.
$$
Hence $f^{2}=f_{0}+d$, and thus $f^{2}\in Q$.
A similar calculation shows that $g^{2}\in Q$, so that we finally get  $Q = P_{f,g}$. 
\end{proof}

\subsection*{Characterization of $\Ptwo$}
The following lemma gives a characterization of the Lie algebras isomorphic to  $\Ptwo$.

\begin{lem}\lab{charP.lem}
Let $Q$ be a Lie algebra containing a subalgebra $Q_{0}$ isomorphic to $\sltwo$. Assume that
\be
\item $Q = Q_{0}\oplus V_{2}\oplus V_{1}$ as an $Q_{0}$-module where $V_{i}$ is simple of dimension $i$,
\item $V_{1}$ is the center of $Q$, and 
\item $[V_{2},V_{2}] = V_{1}$.
\ee
Then $Q$ is isomorphic to $\Ptwo$.
\end{lem}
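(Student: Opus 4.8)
The plan is to observe that the hypotheses pin down the entire Lie bracket on $Q$ up to isomorphism, so that $Q$ must be isomorphic to the model $\Ptwo$, which satisfies the same three conditions (with $Q_0 = P_2 \cong \sltwo$, $V_2 = P_1$ and $V_1 = K$: indeed $P_2\cong\sltwo$, $P_1$ is the simple $2$-dimensional module, $K$ is the center by Lemma~\ref{lem1}(a), and $\{x,y\}=1$ gives $[P_1,P_1]=K$). Concretely, I would decompose every bracket according to the $Q_0$-module decomposition $Q = Q_0 \oplus V_2 \oplus V_1$. The bracket $[Q_0,Q_0]=Q_0$ is fixed once we fix an isomorphism $Q_0 \cong \sltwo$; the action $[Q_0,V_2]\subseteq V_2$ is the given module structure; and since $V_1$ is central by (b) we have $[Q_0,V_1]=[V_2,V_1]=[V_1,V_1]=0$. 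Thus the only bracket not already determined is the map $\beta\colon V_2\times V_2\to Q$, and condition (c) tells us that $\beta$ takes values in $V_1$ with $[V_2,V_2]=V_1$. So everything comes down to understanding $\beta$.

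First I would show that $\beta$ is a nonzero $Q_0$-equivariant alternating form. It is alternating by antisymmetry of the bracket, and nonzero by (c). For equivariance I apply the Jacobi identity to $a\in Q_0$ and $u,v\in V_2$: since $[u,v]\in V_1$ is central, $[a,[u,v]]=0$, and so $[[a,u],v]+[u,[a,v]]=0$. This says exactly that $\beta$ is $Q_0$-invariant, i.e. it defines a nonzero element of $\operatorname{Hom}_{Q_0}(\wedge^2 V_2, V_1)$.

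Next comes the representation-theoretic input, which makes the structure rigid. Since $V_2$ is the unique simple $2$-dimensional, hence standard, $\sltwo$-module, $\wedge^2 V_2$ is the trivial $1$-dimensional module, and $V_1$ is trivial as well; therefore $\operatorname{Hom}_{Q_0}(\wedge^2 V_2, V_1)$ is $1$-dimensional and $\beta$ is determined up to a nonzero scalar. Consequently any Lie algebra satisfying (a)--(c) has the same structure constants up to isomorphism. To finish I would construct an explicit isomorphism $\Ptwo \simto Q$: fix a Lie algebra isomorphism $P_2\simto Q_0$, extend it to a $Q_0$-module isomorphism $P_1\simto V_2$ (possible and unique up to scalar, as both are standard modules), and let it send $K\simto V_1$. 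These pieces automatically respect every bracket except possibly $[P_1,P_1]=K$ versus $[V_2,V_2]=V_1$.

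The one point that needs care — and the only real obstacle — is matching this last bracket. Both $\{\cdot,\cdot\}|_{P_1}$ and $\beta$ are nonzero elements of the $1$-dimensional space of equivariant alternating forms, so they differ by a nonzero scalar, and rescaling the isomorphism $K\simto V_1$ absorbs this scalar and makes the assembled map a Lie homomorphism. The subtlety is that rescaling the module isomorphism $P_1\simto V_2$ by $\lambda$ rescales the induced form by $\lambda^2$, so one must check that the remaining single scalar freedom on the $1$-dimensional piece $V_1$ suffices to fix the discrepancy; it does, precisely because the target is $1$-dimensional and both forms are nonzero. This yields the desired isomorphism $Q\cong\Ptwo$.
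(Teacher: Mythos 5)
Your proof is correct and follows essentially the same route as the paper: fix an isomorphism $P_{2}\simto Q_{0}$, identify $V_{2}$ with the standard module via a basis $(b,c)$ matching $(x,y)$, and send $1$ to $[b,c]\neq 0$, which spans $V_{1}$ by hypothesis (c). Your equivariance/$\operatorname{Hom}_{Q_{0}}(\wedge^{2}V_{2},V_{1})$ discussion is a valid but slightly heavier way of handling the last bracket, which the paper dispatches directly since an alternating form on a two-dimensional space is determined by its value on a single basis pair.
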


\begin{proof}
Choosing an isomorphism of $P_{2}=\langle x^{2},xy,y^{2}\rangle$ with $Q_{0}$ we find a basis $(a_{0},a_{1},a_{2})$ of $Q_{0}$ with relations
\[\tag{$1'$}
[a_{0},a_{1}]= 2a_{0}, \ [a_{0},a_{2}]= 4a_{1}, \ [a_{2},a_{1}]= -2a_{2}
\]
(see (1) above). Since $V_{2}$ is a simple two-dimensional $Q_{0}$-module and $Kx\oplus Ky$ a simple two-dimensional $P_{2}$-module we can find a basis $(b,c)$ of $V_{2}$ such that
\begin{gather*}\tag{$2'$}
[a_{0},b]=0, \ [a_{1},b]=-b, \ [a_{2},b] = -2c, \\ \tag{$3'$}
[a_{0},c]=2b, \ [a_{1},c]=c, \ [a_{2},c] = 0
\end{gather*}
(see (2) and (3) above). Finally, the last assumption (c) implies that 
\[\tag{$4'$}
d:=[b,c]\neq 0, \text{ hence } V_{1}=Kd.
\]
Comparing the relations (1)--(4) with ($1'$)--($4'$) we see that the linear map $\Ptwo \to Q$ given by $x^{2}\mapsto a_{0}$, $xy\mapsto a_{1}$, $y^{2}\mapsto a_{2}$, $x\mapsto b$, $y\mapsto c$, $1 \mapsto d$ is a Lie algebra isomorphism.
\end{proof}

\medskip
\section{Vector fields on affine 2-space}
\subsection*{The action of $\AutK$ on vector fields}
The group $\AutK$ acts on the vector fields $\VF(\Atwo)$ by conjugation: If $D$ is a derivation and $\alpha\in\AutK$, then $\alpha(D) := \alpha\circ D \circ \alpha^{-1}$. Writing $D = p\dx+q\dy$ and $\alpha=(f,g)$, then
\[\tag{$*$}
\alpha(D) = \frac{1}{j(\alpha)}\left(\left(g_{y}\alpha(p)-f_{y}\alpha(q)\right)\dx + \left(-g_{x}\alpha(p)+f_{x}\alpha(q)\right)\dy\right)
\]
In fact, $\alpha(\Jac(\alpha^{-1})) \cdot \Jac(\alpha) = E$, hence $\alpha(\Jac(\alpha^{-1})) = 
\Jac(\alpha)^{-1}= \frac{1}{j(\alpha)} \begin{bmatrix} g_{y} & -f_{y}\\ -g_{x} & f_{x} \end{bmatrix}$. 
Thus we get  for $h\in K[x,y]$ 
\begin{multline*}
\alpha(D)(h) = \alpha(D(\alpha^{-1}(h))) = (h_{x},h_{y})\cdot \alpha(\Jac(\alpha^{-1})) \cdot\begin{bmatrix} \alpha(p) \\ \alpha(q) \end{bmatrix} =\\= (h_{x},h_{y})\cdot \frac{1}{j(\alpha)} \begin{bmatrix} g_{y} & -f_{y}\\ -g_{x} & f_{x} \end{bmatrix}
\cdot \begin{bmatrix} \alpha(p) \\ \alpha(q) \end{bmatrix}
\end{multline*}
In particular, for $h=x$ or $h=y$, we find
$$
\alpha(D)(x) =  \frac{1}{j(\alpha)}(g_{y}\alpha(p) - f_{y}\alpha(q))\text{ \ and \ }
\alpha(D)(y) =  \frac{1}{j(\alpha)}(-g_{x}\alpha(p) + f_{x}\alpha(q)),
$$
and the claim follows. 
\begin{rem}\lab{etale.rem}
If $\alpha\colon K[x,y] \to K[x,y]$ is \'etale, i.e. $j(\alpha)\in K^{*}$, then formula $(*)$ still makes sense and defines a map 
$$
\alpha \colon \VF(\Atwo) \to \VF(\Atwo), \ D \mapsto \alpha(D),
$$ 
which is an injective homomorphism of Lie algebras. In fact, we have by definition $\alpha(D)\circ\alpha = \alpha\circ D$, and so
\begin{multline*}
\alpha([D_{1},D_{2}])\circ \alpha = \alpha\circ D_{1}\circ D_{2}-\alpha\circ D_{2}\circ D_{1}= \alpha(D_{1})\circ \alpha\circ D_{2} - 
\alpha(D_{2})\circ \alpha\circ D_{1}=\\
=\alpha(D_{1})\circ\alpha(D_{2})\circ \alpha - \alpha(D_{2})\circ\alpha(D_{1})\circ \alpha = [\alpha(D_{1}),\alpha(D_{2})]\circ \alpha.
\end{multline*}
\end{rem}
Recall that  $\VFd(\Atwo) \subset \VF(\Atwo)$ are the vector fields $D$ with $\Div D \in K$. Clearly,  the divergence $\Div\colon \VFd(\Atwo) \to K$ is a character with kernel $\VFzero(\Atwo)$, and we have the decomposition
$$
\VFd(\Atwo) = \VFzero(\Atwo) \oplus KE \text{ \ where \ } E:=x\dx + y\dy \text{ \ is the {\it Euler field}}.
$$

\begin{lem}\lab{equiv.lem}
If $\alpha\colon K[x,y] \to K[x,y]$ is \'etale, then $\alpha(D_{h}) = j(\alpha)^{-1}D_{\alpha(h)}$, and $\Div(\alpha(E)) = 2$. Hence $\alpha(\VFzero(\Atwo)) \subset \VFzero(\Atwo)$ and $\alpha(\VFd(\Atwo)) \subset \VFd(\Atwo)$ . In particular, the homomorphism
$\mu\colon P \to \VF(\Atwo)$ is equivariant with respect to the group $\SAutK=\SAutLA(P)$.
\end{lem}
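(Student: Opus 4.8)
The plan is to verify the two displayed formulas by direct substitution into formula~$(*)$ and then to read off the four remaining assertions as formal consequences. First I would establish $\alpha(D_{h}) = j(\alpha)^{-1}D_{\alpha(h)}$. Writing $D_{h} = -h_{y}\dx + h_{x}\dy$, I apply $(*)$ with $p = -h_{y}$ and $q = h_{x}$, so that $\alpha(p) = -\alpha(h_{y})$ and $\alpha(q) = \alpha(h_{x})$. The chain-rule identity already used in the proof of Lemma~\ref{aut.lem}, namely $(\alpha(h))_{x} = \alpha(h_{x})f_{x} + \alpha(h_{y})g_{x}$ and $(\alpha(h))_{y} = \alpha(h_{x})f_{y} + \alpha(h_{y})g_{y}$, lets me recognise the $\dx$-coefficient of $\alpha(D_{h})$ as $-j(\alpha)^{-1}(\alpha(h))_{y}$ and the $\dy$-coefficient as $j(\alpha)^{-1}(\alpha(h))_{x}$, which is precisely $j(\alpha)^{-1}D_{\alpha(h)}$.

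Next I would compute $\Div(\alpha(E))$. Applying $(*)$ to $E = x\dx + y\dy$ (so that $\alpha(p)=f$ and $\alpha(q)=g$) gives $\alpha(E) = j(\alpha)^{-1}((g_{y}f - f_{y}g)\dx + (f_{x}g - g_{x}f)\dy)$. Since $\alpha$ is \'etale, $j(\alpha) \in K^{*}$ is a constant and therefore pulls out of the divergence intact; expanding $\partial_{x}(g_{y}f - f_{y}g) + \partial_{y}(f_{x}g - g_{x}f)$, the second-derivative terms cancel by equality of mixed partials ($f_{xy}=f_{yx}$ and $g_{xy}=g_{yx}$), and what remains is $2(f_{x}g_{y} - f_{y}g_{x}) = 2j(\alpha)$. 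Hence $\Div(\alpha(E)) = 2$.

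The remaining assertions are then immediate. Every element of $\VFzero(\Atwo)$ has the form $D_{h}$ by Lemma~\ref{derivation.lem}(b), and since any field $D_{k}$ satisfies $\Div D_{k} = -k_{yx}+k_{xy}=0$, the first formula shows $\alpha(D_{h}) = j(\alpha)^{-1}D_{\alpha(h)} \in \VFzero(\Atwo)$, whence $\alpha(\VFzero(\Atwo)) \subset \VFzero(\Atwo)$. For the larger algebra I use the decomposition $\VFd(\Atwo) = \VFzero(\Atwo)\oplus KE$: writing $D = D_{0} + cE$ with $D_{0}\in\VFzero(\Atwo)$ and $c\in K$, the two computations yield $\Div(\alpha(D)) = 0 + 2c \in K$, so $\alpha(\VFd(\Atwo))\subset\VFd(\Atwo)$. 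Finally, for $\alpha\in\SAutK=\SAutLA(P)$ one has $j(\alpha)=1$, so the first formula collapses to $\alpha(D_{h}) = D_{\alpha(h)}$, i.e. $\alpha(\mu(h)) = \mu(\alpha(h))$, which is exactly the asserted equivariance of $\mu$.

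I do not expect a genuine conceptual obstacle, as the statement is purely computational; the step most prone to error is the divergence computation, where one must keep in mind that $j(\alpha)$ is constant (so it survives the differentiation as a bare factor) and check that exactly the second-derivative terms cancel, allowing the Jacobian determinant to reappear and produce the clean value $2$.
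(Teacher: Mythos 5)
Your proof is correct; the only point where you diverge from the paper is in how the identity $\alpha(D_{h}) = j(\alpha)^{-1}D_{\alpha(h)}$ is obtained. The paper evaluates both sides on elements of the form $\alpha(f)$, using the defining relation $\alpha(D_{h})\circ\alpha = \alpha\circ D_{h}$ together with the composition rule $j(\alpha(h),\alpha(f)) = \alpha(j(h,f))\,j(\alpha)$ from the proof of Lemma~\ref{aut.lem}; this shows the two derivations agree on $K[f,g]$, and equality on all of $K[x,y]$ then rests (implicitly) on the fact that a derivation vanishing on the algebraically independent pair $f,g$ vanishes identically in characteristic zero. You instead substitute $p=-h_{y}$, $q=h_{x}$ directly into formula $(*)$ and reassemble the coefficients via the chain rule, which gives the identity outright with no appeal to that extension step --- a slightly more pedestrian but also slightly more self-contained computation. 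The divergence calculation for $\alpha(E)$ (where the paper simply asserts the value $2$ from the displayed formula) and the deduction of the inclusions $\alpha(\VFzero(\Atwo))\subset\VFzero(\Atwo)$, $\alpha(\VFd(\Atwo))\subset\VFd(\Atwo)$ and of the $\SAutK$-equivariance of $\mu$ are the same in both arguments; your explicit use of the decomposition $\VFd(\Atwo)=\VFzero(\Atwo)\oplus KE$ and the remark that $j(\alpha)$ is a constant and so passes through the divergence are exactly the details the paper leaves to the reader.
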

\begin{proof} We have $\alpha(D_{h})\circ\alpha = \alpha\circ D_{h}$, hence 
\begin{multline*}
\alpha(D_{h})(\alpha(f)) = \alpha(D_{h}(f)) = \alpha(j(h,f)) = j(\alpha)^{-1}j(\alpha(h),\alpha(f)) =\\= j(\alpha)^{-1}D_{\alpha(h)}(\alpha(f)).
\end{multline*}
From formula $(*)$ we get $\alpha(E) = \frac{1}{j(\alpha)}\left((g_{y}f-f_{y}g)\dx + (-g_{x}f+f_{x}g)\dy\right)$ which implies that $\Div\alpha(E) = 2$.
\end{proof}

\subsection*{Lie subalgebras of $\VF(\Atwo)$}
Let $\Aff(\Atwo)$ denote the group of {\it affine transformations\/} of $\Atwo$, $x \mapsto Ax + b$, where $A\in\GL_{2}(K)$ and $b\in K^{2}$. The determinant defines a character $\det\colon \Aff(\Atwo) \to K^{*}$ whose kernel will be denoted by $\SAff(\Atwo)$. For the corresponding Lie algebras we write $\saff_{2}:=\Lie \SAff(\Atwo) \subset \aff_{2}:=\Lie \Aff(\Atwo)$. There is a canonical embedding $\aff_{2}\subset \VF(\Atwo)$ which identifies $\aff_{2}$ with the Lie subalgebra
$$
\langle \dx,\dy, x\dx + y\dy, x\dx - y\dy, x\dy, y\dx \rangle \subset \VFd(\Atwo),
$$
and $\saff_{2}$ with
$$
\mu(P_{x,y}) = \langle \dx,\dy, x\dx - y\dy, x\dy, y\dx \rangle \subset \VFzero(\Atwo).
$$
Note that the {\it Euler field\/} $E=x\dx + y\dy \in \aff_{2}$ is determined by the condition that $E$ acts trivially on $\sltwo$ and that $[E,D]=-D$ for $D\in \rad(\saff_{2})=K\dx\oplus K\dy$. We also remark that the centralizer of $\saff_{2}$ in $\VF(\Atwo)$ is trivial: 
$$
\cent_{\VF(\Atwo)}(\saff_{2})=(0).
$$
In fact, $\cent_{\VF(\Atwo)}(\{\dx,\dy)\}= K\dx\oplus K\dy$, and $(K\dx\oplus K\dy)^{\sltwo} = (0)$.

Let $\alpha=(f,g)\in\EndK$ be \'etale, and assume, for simplicity, that $j(f,g)=1$. Then we get from formula $(*)$ 
\begin{gather*}
\alpha(\dx)=g_{y}\dx -g_{x}\dy = - D_{g}, \quad
\alpha(\dy)= -f_{y}\dx +f_{x}\dy = D_{f},\\
\alpha(x\dy) = f D_{f}=\textstyle{\frac{1}{2}}D_{f^{2}}, \quad \alpha(y \dx) = -g D_{g}= -\textstyle{\frac{1}{2}}D_{g^{2}}, \\ 
\alpha(x\dx) = -fD_{g}, \quad \alpha(y\dy)=g D_{f}, \quad \alpha(x\dx-y\dy) = -D_{fg}. 
\end{gather*}
This shows that for an \'etale map $\alpha=(f,g)$ we obtain
\begin{gather*}
\alpha(\aff_{2}) = \langle D_{f},D_{g},D_{f^{2}},D_{g^{2}},fD_{g},gD_{f}\rangle,  \\
\alpha(\saff_{2}) = \langle D_{f},D_{g},D_{f^{2}},D_{g^{2}},D_{fg}\rangle = \mu(P_{f,g})
\end{gather*}

\begin{prop}\lab{subVec1.prop}
Let $L\subset \VFd(\Atwo)$ be a Lie subalgebra isomorphic to $\saff_{2}$. Then there is an \'etale map $\alpha=(f,g)$ such that $L = \alpha(\saff_{2})$. More precisely, if $(D_{f},D_{g})$ is a basis of $\rad(L)$, then  $L = \langle D_{f},D_{g},D_{f^{2}},D_{g^{2}},D_{fg}\rangle$, and one can take $\alpha=(f,g)$. 
\end{prop}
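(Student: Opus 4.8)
The plan is to lift $L$ into the Poisson algebra $P$, identify the lift with a copy of $\Ptwo$ by means of Lemma~\ref{charP.lem}, and then read off the assertion from Proposition~\ref{subP.prop} together with the homomorphism $\mu$. The first step is to check that $L\subset\VFzero(\Atwo)$: the Lie algebra $\saff_2=\sltwo\ltimes\rad(\saff_2)$ is perfect (we have $[\sltwo,\sltwo]=\sltwo$, and $\sltwo$ acts on its two-dimensional radical as the standard simple module, so $[\sltwo,\rad(\saff_2)]=\rad(\saff_2)$), hence $L$ is perfect, $L=[L,L]$. Since $\Div\colon\VFd(\Atwo)\to K$ is a character into the abelian Lie algebra $K$, the bracket of any two elements of $\VFd(\Atwo)$ is divergence-free, so $[\VFd,\VFd]\subset\VFzero$; therefore $L=[L,L]\subset\VFzero(\Atwo)=\mu(P)$.

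Now set $\tilde{L}:=\mu^{-1}(L)\subset P$. As $\mu$ is a Lie homomorphism with $\ker\mu=K$ and $L\subset\mu(P)$, the subspace $\tilde{L}$ is a subalgebra containing $K$, and $\mu$ restricts to a surjection $\tilde{L}\to L$ with kernel $K$; thus $\tilde{L}$ is a six-dimensional central extension of $\saff_2$ by $K$. I then verify the hypotheses of Lemma~\ref{charP.lem} for $\tilde{L}$. By Levi's theorem there is a subalgebra $Q_0\cong\sltwo$ with $\tilde{L}=Q_0\oplus\rad(\tilde{L})$, and since $\mu$ carries $Q_0$ isomorphically onto a Levi subalgebra of $L$, Weyl's complete reducibility yields a $Q_0$-module splitting $\tilde{L}=Q_0\oplus V_2\oplus V_1$ in which $V_1=K$ is the trivial summand and $V_2$ is mapped isomorphically by $\mu$ onto the standard module $\rad(L)$; this is condition (a). For (b) we have $K\subset\cent(\tilde{L})$, while any central $z$ satisfies $\mu(z)\in\cent(L)=\cent(\saff_2)=0$, so $z\in K$ and $V_1=\cent(\tilde{L})$.

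The crux is condition (c), that $[V_2,V_2]=V_1$, and this is the step I expect to be the main obstacle. Since $\mu(V_2)=\rad(L)$ is abelian, $\mu([V_2,V_2])=0$, so $[V_2,V_2]\subset\ker\mu=V_1$; the issue is to rule out $[V_2,V_2]=0$, which cannot be done abstractly because the split extension $\saff_2\times K$ also satisfies (a) and (b). One must use the concrete embedding in $P$. Let $b,c$ be a basis of $V_2$ and suppose $\{b,c\}=0$. By Lemma~\ref{lem1}(b) there is a non-constant $h$ with $b=\beta(h)$, $c=\gamma(h)$, where $\beta,\gamma$ are non-constant polynomials because $V_2\cap K=0$. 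Choosing a Cartan element $a_1\in Q_0$ with $\{a_1,b\}=-b$ and $\{a_1,c\}=c$ and putting $w:=\{a_1,h\}$, the chain rule gives $\beta'(h)\,w=-\beta(h)$ and $\gamma'(h)\,w=\gamma(h)$; here $w\neq 0$, for otherwise $b=0$. Eliminating $w$ from these two identities yields $(\beta\gamma)'(h)\,w=0$, hence $(\beta\gamma)'=0$ since $h$ is transcendental over $K$, so $\beta\gamma$ is constant, contradicting that $\beta$ and $\gamma$ are non-constant. Thus $\{b,c\}\neq 0$ and (c) holds.

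Lemma~\ref{charP.lem} now gives $\tilde{L}\cong\Ptwo$, so Proposition~\ref{subP.prop} applies: for any $f,g$ with $\langle 1,f,g\rangle=\rad(\tilde{L})$ one has $\tilde{L}=P_{f,g}=\langle 1,f,g,f^2,fg,g^2\rangle$ and $\{f,g\}\in K^{*}$. Applying $\mu$ and using $\mu(1)=0$ gives $L=\langle D_f,D_g,D_{f^2},D_{g^2},D_{fg}\rangle$, where $(D_f,D_g)$ is a basis of $\rad(L)=\mu(\rad(\tilde{L}))$. Since $j(f,g)=\{f,g\}\in K^{*}$ the map $\alpha=(f,g)$ is étale, and by the computation preceding the proposition $\alpha(\saff_2)=\mu(P_{f,g})=L$. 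Finally, as $P_{f,g}$ depends only on $\langle 1,f,g\rangle$, the conclusion holds for every basis $(D_f,D_g)$ of $\rad(L)$, which completes the proof.
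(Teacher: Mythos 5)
Your proposal is correct and follows essentially the same route as the paper: reduce to $\VFzero(\Atwo)$, lift $L$ to $\mu^{-1}(L)\subset P$, verify the hypotheses of Lemma~\ref{charP.lem} with the crux being $\{V_2,V_2\}\neq 0$, which both arguments establish by writing the two elements as polynomials in a common $h$ via Lemma~\ref{lem1}(b) and applying the chain rule. The only (immaterial) difference is that the paper derives the contradiction using the nilpotent element of an $\sltwo$-triple acting on a highest-weight basis of $V_2$, whereas you use the Cartan element and an eigenbasis.
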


\begin{proof}
We first remark that $L \subset \VFzero(\Atwo)$, because $\saff_{2}$ has no non-trivial character.
By Proposition~\ref{subP.prop} it suffices to show that $Q:=\mu^{-1}(L)\subset P$ is isomorphic to $\Ptwo$. We fix a decomposition $L =L_{0}\oplus \rad(L)$ where $L_{0}\simeq \sltwo$.
It is clear that the Lie subalgebra $\tilde Q:=\mu^{-1}(L_{0})\subset P$ contains a copy of $\sltwo$, i.e. $\tilde Q= Q_{0}\oplus K$ where $Q_{0}\simeq \sltwo$. Hence, as an $Q_{0}$-module, we get $Q = Q_{0}\oplus V_{2}\oplus K$ where $V_{2}$ is a two-dimensional irreducible $Q_{0}$-module which is isomorphically mapped onto $\rad(L)$ under $\mu$.  Since $\{\rad(L),\rad(L)\} = (0)$ we have $\{V_{2},V_{2}\}\subset K$. Now the claim follows from Lemma~\ref{charP.lem} if we show that $\{V_{2},V_{2}\}\neq (0)$. 

Assume that $\{V_{2},V_{2}\}= (0)$. Choose an $\sltwo$-triple $(e_0,h_0,f_0)$ in $Q_{0}$ and a basis $(f,g)$ of $V_{2}$ such that $\{e_0,f\}=g$ and $\{e_0,g\}=0$. Since $\{f,g\}=0$ we get from Lemma~\ref{lem1}(b) that $f,g\in K[h]$ for some $h\in K[x,y]$, i.e. $f=p(h)$ and $g=q(h)$ for some polynomials $p,q\in K[t]$. But then $0=\{e_0,g\}=\{e_0,q(h)\} = q'(h)\{e_0,h\}$ and so $\{e_0,h\}=0$. This implies that $g=\{e_0,f\}=\{e_0,p(h)\}=p'(h)\{e_0,h\}=0$, a contradiction.
\end{proof}

\begin{rem}\lab{splitsaff.rem}
The above description of the Lie subalgebras $L$ isomorphic to $\saff_{2}$ also gives a Levi decomposition of $L$. In fact,
$(D_{f},D_{g})$ is a basis of $\rad(L)$ and $L_{0} := \langle  D_{f^{2}},D_{g^{2}},D_{fg}\rangle$ is a subalgebra isomorphic to $\sltwo$. The following corollary shows that every Levi decomposition is obtained in this way.
\end{rem}

\begin{cor}\lab{subVec.cor}
Let $L \subset \VFd(\Atwo)$ be a Lie subalgebra isomorphic to $\saff_{2}$, and let $L = \rad(L)\oplus L_{0}$ be a Levi decomposition. Then there exist $f,g\in K[x,y]$ such that $\rad(L) = \langle D_{f},D_{g}\rangle$ and $L_{0}= \langle D_{f^{2}},D_{fg},D_{g^{2}}\rangle$.
Moreover, if $L'\subset \VFd(\Atwo)$ is another Lie subalgebra isomorphic to $\saff_{2}$ and if $L'\supset L_{0}$, then $L'= L$.
\end{cor}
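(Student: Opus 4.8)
The plan is to combine the classification of Proposition~\ref{subVec1.prop} with the conjugacy of Levi complements for the first assertion, and then to reduce the ``Moreover'' part to a purely algebraic statement: that the subspace $\langle f^{2},fg,g^{2}\rangle\subset K[x,y]$ remembers the subspace $\langle f,g\rangle$.

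For the first assertion I would start from Proposition~\ref{subVec1.prop}: choosing a basis $(D_{f_{0}},D_{g_{0}})$ of $\rad(L)$ we get $L=\langle D_{f_{0}},D_{g_{0}},D_{f_{0}^{2}},D_{g_{0}^{2}},D_{f_{0}g_{0}}\rangle$ with $\alpha_{0}=(f_{0},g_{0})$ étale, and after rescaling $g_{0}$ we may assume $\{f_{0},g_{0}\}=1$. This already exhibits one Levi complement, the image $\langle D_{f_{0}^{2}},D_{g_{0}^{2}},D_{f_{0}g_{0}}\rangle$ of the standard $\sltwo\subset\saff_{2}$ under $\alpha_{0}$. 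By the Levi--Malcev theorem every Levi complement of the model $\saff_{2}$ is a conjugate of this standard $\sltwo$ by some $\exp(\operatorname{ad}t)$ with $t$ in the nilradical $K\dx\oplus K\dy$; since $\exp(t)$ is a translation $(x,y)\mapsto(x-a,y-b)$, Lemma~\ref{equiv.lem} identifies these complements with $\langle D_{(x-a)^{2}},D_{(y-b)^{2}},D_{(x-a)(y-b)}\rangle$. Transporting through the Lie isomorphism $\alpha_{0}\colon\saff_{2}\simto L$ and using $\alpha_{0}(D_{h})=D_{\alpha_{0}(h)}$ (Lemma~\ref{equiv.lem}, with $j(\alpha_{0})=1$), every Levi complement of $L$ has the form $\langle D_{(f_{0}-a)^{2}},D_{(g_{0}-b)^{2}},D_{(f_{0}-a)(g_{0}-b)}\rangle$. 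Given the prescribed $L_{0}$, I set $f:=f_{0}-a$ and $g:=g_{0}-b$; as $D_{f}=D_{f_{0}}$ and $D_{g}=D_{g_{0}}$, this yields $\rad(L)=\langle D_{f},D_{g}\rangle$ and $L_{0}=\langle D_{f^{2}},D_{fg},D_{g^{2}}\rangle$.

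For the ``Moreover'' part, observe that $L_{0}\cong\sltwo$ is a $3$-dimensional semisimple subalgebra of $L'\cong\saff_{2}$, hence a Levi complement of $L'$ too. Applying the first assertion to $L'$ gives $f',g'$ with $\rad(L')=\langle D_{f'},D_{g'}\rangle$ and $L_{0}=\langle D_{f'^{2}},D_{f'g'},D_{g'^{2}}\rangle$. Put $S_{0}:=\langle f^{2},fg,g^{2}\rangle$ and $S_{0}':=\langle f'^{2},f'g',g'^{2}\rangle$. Since $\mu$ has kernel $K$ and $L_{0}\subset\VFzero$, we have $\mu^{-1}(L_{0})=S_{0}\oplus K=S_{0}'\oplus K$; the constants are central (Lemma~\ref{lem1}(a)) and $S_{0}\cong\sltwo$ is perfect, so $\{\mu^{-1}(L_{0}),\mu^{-1}(L_{0})\}=\{S_{0},S_{0}\}=S_{0}$, and likewise it equals $S_{0}'$, whence $S_{0}=S_{0}'$. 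It then suffices to prove that $S_{0}$ determines $V:=\langle f,g\rangle$, which I would do by characterizing the perfect squares of $K[x,y]$ lying in $S_{0}$ as exactly the elements $v^{2}$ with $v\in V$: factoring $w=\alpha f^{2}+\beta fg+\gamma g^{2}=p_{1}p_{2}$ into linear forms $p_{i}\in V$ over the algebraically closed $K$, the case $\beta^{2}=4\alpha\gamma$ gives $w=v^{2}$ directly, while if $p_{1},p_{2}$ are independent then $\{p_{1},p_{2}\}\in K^{*}$, so any common factor divides the constant $\{p_{1},p_{2}\}$ and $p_{1},p_{2}$ are coprime, forcing each to be a square when $p_{1}p_{2}=h^{2}$ and contradicting $\{p_{1},p_{2}\}\in K^{*}$. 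This characterization is intrinsic to the subspace $S_{0}$, so $\{v^{2}:v\in V\}=\{v'^{2}:v'\in V'\}$; taking square roots in the domain $K[x,y]$ gives $v\in\{\pm v'\}\subset V'$ for each $v\in V$, hence $V\subseteq V'$ and by symmetry $V=V'$. Therefore $\rad(L)=\mu(V)=\mu(V')=\rad(L')$, and $L=\rad(L)\oplus L_{0}=\rad(L')\oplus L_{0}=L'$.

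The main obstacle is precisely the algebraic claim in the previous paragraph, namely excluding a quadratic form that factors as $p_{1}p_{2}$ with $p_{1},p_{2}\in V$ linearly independent yet $p_{1}p_{2}$ a perfect square. The hypothesis $\{f,g\}\in K^{*}$ is exactly what rules this out: using that the Poisson bracket is a derivation in each slot, $\{eu,ev\}=e\bigl(e\{u,v\}+v\{u,e\}+u\{e,v\}\bigr)$ is divisible by any common factor $e$, so coprimality of $p_{1},p_{2}$ follows, and then unique factorization in $K[x,y]$ together with the fact that a product of non-constant polynomials is never a unit finishes the contradiction.
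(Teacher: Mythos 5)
Your proof is correct and follows essentially the same route as the paper: Proposition~\ref{subVec1.prop} plus Levi--Malcev conjugation by translations from the nilradical for the first assertion, and identification of the quadratic spans for the second. The only place you diverge is the final step of the ``Moreover'' part: the paper reduces to $\langle f^{2},g^{2},fg,1\rangle=\langle x^{2},y^{2},xy,1\rangle$ and simply asserts that the claim follows, whereas you actually prove that the subspace $\langle f^{2},fg,g^{2}\rangle$ determines $\langle f,g\rangle$ by characterizing its perfect squares; your coprimality/UFD argument for that characterization is sound, and the preliminary reduction $S_{0}=S_{0}'$ via perfectness of $\sltwo$ is a clean way to strip off the constants. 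In short, you supply a genuine argument for a step the paper leaves implicit, but the overall strategy is the same.
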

\begin{proof}
We can assume that $L = \saff_{2} =\langle D_{x},D_{y},D_{x^{2}},D_{y^{2}},D_{xy}\rangle$. Then every Lie subalgebra $L_{0}\subset L$ isomorphic to $\sltwo$ is the image of $\sltwo=  \langle D_{x^{2}},D_{y^{2}},D_{xy}\rangle$ under conjugation with an element $\alpha$ of the solvable radical $R$ of $\SAff_{2}$. As a subgroup of $\AutK$ the elements of $R$ are the translations $\alpha=(x+a,y+b)$, and we get $\rad(L)=\langle D_{x+a},D_{y+b}\rangle$ and $\alpha(\sltwo)=\langle D_{(x+a)^{2}},D_{(y+b)^{2}},D_{(x+a)(y+b}\rangle$ as claimed. 

For the last statement, we can assume that $L' = \langle D_{f},D_{g},D_{f^{2}},D_{g^{2}},D_{fg}\rangle$ such that $\langle D_{f^{2}},D_{g^{2}},D_{fg}\rangle = \sltwo$. This implies that $\langle f^{2},g^{2},fg,1\rangle=\langle x^{2},y^{2},xy,1\rangle$, and the claim follows.
\end{proof}

\begin{prop}\lab{subVec2.prop}
Let $M\subset \VFd(\Atwo)$ be a Lie subalgebra isomorphic to $\aff_{2}$. Then there is an \'etale map $\alpha$ such that $M = \alpha(\aff_{2})$. More precisely, if $(D_{f},D_{g})$ is a basis of $\rad([M,M])$, then  $M = \langle D_{f},D_{g},fD_{f},gD_{g},gD_{f},fD_{g}\rangle$, and one can take $\alpha=(f,g)$. 
\end{prop}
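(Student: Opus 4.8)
The plan is to reduce the statement to Proposition~\ref{subVec1.prop}, applied to the derived subalgebra $[M,M]$, and then to pin down the one extra dimension by a centralizer argument. I first record the abstract structure of $\aff_{2}$: one has $[\aff_{2},\aff_{2}]=\saff_{2}$, the Levi factor is $\sltwo$, the solvable radical is $KE\oplus\rad(\saff_{2})$, and $\cent_{\aff_{2}}(\saff_{2})=0$. Since $M\simeq\aff_{2}$, its derived subalgebra $[M,M]\subset\VFd(\Atwo)$ is isomorphic to $\saff_{2}$, so Proposition~\ref{subVec1.prop} gives an \'etale map $\alpha=(f,g)$ with
$$
[M,M]=\langle D_{f},D_{g},D_{f^{2}},D_{g^{2}},D_{fg}\rangle=\alpha(\saff_{2}),
$$
where $(D_{f},D_{g})$ is a basis of $\rad([M,M])$ and, by Remark~\ref{splitsaff.rem}, $L_{0}:=\langle D_{f^{2}},D_{g^{2}},D_{fg}\rangle\simeq\sltwo$ is a Levi factor while $\langle D_{f},D_{g}\rangle$ is the standard two-dimensional $L_{0}$-module. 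As $\alpha$ is a Lie homomorphism, $\alpha(\aff_{2})=[M,M]\oplus K\alpha(E)$ with $\alpha(E)=gD_{f}-fD_{g}$; since $\dim M=6=\dim[M,M]+1$, it suffices to show $\alpha(E)\in M$, for then $M=\alpha(\aff_{2})=\langle D_{f},D_{g},fD_{f},gD_{g},gD_{f},fD_{g}\rangle$ (using $D_{f^{2}}=2fD_{f}$ and $D_{g^{2}}=2gD_{g}$).

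Next I would produce a generator of $M/[M,M]$ with good properties. Viewing $M$ as an $L_{0}$-module under the adjoint action, Weyl's theorem gives complete reducibility; since $[M,M]=L_{0}\oplus\langle D_{f},D_{g}\rangle$ is a submodule, a complement is one-dimensional, hence a trivial module $KT$ with $[L_{0},T]=0$ and $M=[M,M]\oplus KT$. Because $T$ centralizes $L_{0}$, the map $D\mapsto[T,D]$ is $L_{0}$-equivariant on the simple module $\langle D_{f},D_{g}\rangle$, so Schur's lemma yields $[T,D_{f}]=cD_{f}$ and $[T,D_{g}]=cD_{g}$ for a single $c\in K$. If $c=0$ then $T$ centralizes all of $[M,M]$, which will be excluded by the centralizer computation below; so $c\ne0$, and after rescaling $T$ (still a generator of the trivial summand) I may assume $[T,D_{f}]=-D_{f}$, $[T,D_{g}]=-D_{g}$, $[T,L_{0}]=0$.

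The main obstacle is to establish
$$
\cent_{\VF(\Atwo)}([M,M])=0.
$$
One cannot simply transport the identity $\cent_{\VF(\Atwo)}(\saff_{2})=0$ along $\alpha$, since $\alpha$ is only \'etale and need not be surjective. Instead I compute directly. Because $\{f,g\}=j(\alpha)\in K^{*}$ is a nonzero constant, $f,g$ form an \'etale coordinate system, and inverting the Jacobian shows that, as derivations of $K(x,y)=K(f,g)$, one has $D_{f}=j(\alpha)\,\frac{\partial}{\partial g}$ and $D_{g}=-j(\alpha)\,\frac{\partial}{\partial f}$. Hence any vector field commuting with both $D_{f}$ and $D_{g}$ has constant coefficients in the coordinates $(f,g)$, i.e. lies in $KD_{f}\oplus KD_{g}$. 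Imposing additionally commutation with $L_{0}$ puts such an element into the $L_{0}$-invariants of the simple module $\langle D_{f},D_{g}\rangle$, which are zero; this proves the vanishing of the centralizer, and in particular gives $c\ne0$ above.

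Finally I identify $T$ with $\alpha(E)$. Applying the homomorphism $\alpha$ (Remark~\ref{etale.rem}) to the defining relations $[E,\sltwo]=0$, $[E,\dx]=-\dx$, $[E,\dy]=-\dy$, and using $\alpha(\dx)=-D_{g}$, $\alpha(\dy)=D_{f}$, $\alpha(\sltwo)=L_{0}$, gives $[\alpha(E),L_{0}]=0$ and $[\alpha(E),D_{f}]=-D_{f}$, $[\alpha(E),D_{g}]=-D_{g}$; these are exactly the relations satisfied by $T$. Therefore $T-\alpha(E)$ centralizes $[M,M]$, so $T=\alpha(E)$ by the previous step, whence $\alpha(E)\in M$. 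The dimension count then forces $M=\alpha(\aff_{2})$, completing the proof.
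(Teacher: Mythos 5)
Your overall strategy is sound and genuinely different from the paper's in the key step, but there is one assertion you must not make: in the centralizer computation you write ``as derivations of $K(x,y)=K(f,g)$''. For an \'etale map $\alpha=(f,g)$ the equality $K(x,y)=K(f,g)$ is \emph{not} known --- it says that $\alpha$ is birational, and a birational \'etale endomorphism of $\Atwo$ is an automorphism, so this equality is essentially the Jacobian Conjecture itself, i.e.\ exactly what the whole paper is trying to avoid assuming. Fortunately your argument does not really need it: since $j(f,g)\in K^{*}$, the matrix $\Jac(f,g)$ is invertible over $K[x,y]$, so the derivations $\partial_{f}:=-j(\alpha)^{-1}D_{g}$ and $\partial_{g}:=j(\alpha)^{-1}D_{f}$ form a free $K[x,y]$-basis of $\Der(K[x,y])$ with $[\partial_{f},\partial_{g}]=0$ (a derivation killing both $f$ and $g$ kills $K[f,g]$ and hence, by separable algebraicity of $K(x,y)$ over $K(f,g)$, is zero). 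Writing $D=u\partial_{f}+v\partial_{g}$ and imposing $[D,\partial_{f}]=[D,\partial_{g}]=0$ gives $\partial_{f}(u)=\partial_{g}(u)=0$, hence $\dx(u)=\dy(u)=0$ and $u\in K$ (or invoke Lemma~\ref{lem1}(c)); the same for $v$. With this repair the conclusion $\cent_{\VF(\Atwo)}(\langle D_{f},D_{g}\rangle)=KD_{f}\oplus KD_{g}$, and hence $\cent_{\VF(\Atwo)}([M,M])=0$, is correct, and the rest of your proof (Weyl complete reducibility, Schur's lemma giving $[T,D_{f}]=cD_{f}$, exclusion of $c=0$, and the identification $T=\alpha(E)$ by uniqueness of the element satisfying the Euler relations) goes through.

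For comparison, the paper reaches the same endpoint by a shorter route that avoids computing the full centralizer: it decomposes $\VFd(\Atwo)=\VFzero(\Atwo)\oplus KE$, writes the extra generator as $D=aE+D_{h}$, observes that $D_{h}$ commutes with the Levi factor $\langle D_{f^{2}},D_{g^{2}},D_{fg}\rangle$, and then the Poisson identity $\{h,f^{2}\}=2f\{h,f\}\in K$ forces $\{h,f\}=\{h,g\}=0$, whence $h\in K$ by Lemma~\ref{lem1}(c) and $D=aE$. Your version buys a slightly stronger statement (the vanishing of the centralizer of $[M,M]$ in all of $\VF(\Atwo)$, which parallels the remark $\cent_{\VF(\Atwo)}(\saff_{2})=0$ made for the standard copy), at the cost of the extra coordinate computation that caused the slip above.
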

\begin{proof}
The subalgebra $M':=[M,M]$ is isomorphic to $\saff$, hence, by Proposition~\ref{subVec1.prop}, $M'=\alpha(\saff_{2})$ for an \'etale map $\alpha=(f,g)$ where we can assume that $j(\alpha)=1$. We want to show that $\alpha(\aff_{2})=M$.
Consider the decomposition 
$M = J \oplus M_0 \oplus KD$
where $J = \rad(M')$, $M_0$ is isomorphic to $\sltwo$, and $D$ is the Euler-element acting
trivially on $M_0$. 
We have $\alpha(\aff_2)=M' \oplus KE$ where $E$ is the image of the Euler element of $\aff_2$.
Since $\VFd(\Atwo) = \VFzero(\Atwo) \oplus KD'$ for any $D' \in \VFd(\Atwo)$ with $\Div D'\neq 0$ we can write $D = aE + F$ with some $a\in K$ and $F \in \VFzero(\Atwo)$, i.e. $F = D_{h}$ for some $h\in K[x,y]$.

By construction, $F=D-aE$ commutes with $M_0$.
Since  $M_{0}= \langle D_{f^{2}},D_{g^{2}},D_{fg} \rangle$ we get
   $\{h,f^2\}  =  c$ where  $c \in K$.
Thus
$c = \{h,f^2\} = 2f\{h,f\}$
which implies that $\{h,f\}=0$. Similarly, we find $\{h,g\}=0$, hence $h$ is in the center of $\mu^{-1}(M')=P_{f,g} \subset P$. 
Thus, by Lemma~\ref{lem1}(c), $h\in K$ and so $D_h=0$ which implies $D = aE$.
\end{proof}

\medskip
\section{Vector fields and the Jacobian Conjecture}
\subsection*{The Jacobian Conjecture} 
Recall that the {\it Jacobian Conjecture} in dimension $n$ says that an \'etale homomorphism $\alpha\colon K[x_{1},\ldots,x_{n}] \to K[x_{1},\ldots,x_{n}]$ is an isomorphism.

\begin{thm}\lab{JC.thm}
The following statements are equivalent. 
\be
\item[(i)] The Jacobian Conjecture holds in dimension 2.
\item[(ii)] All Lie subalgebras of $P$ isomorphic to $\Ptwo$ are equivalent under $\AutLA(P)$.
\item[(iii)] All Lie subalgebras of $\VFd(\Atwo)$ isomorphic to $\saff_{2}$ are conjugate under $\AutK$.
\item[(iv)] All Lie subalgebras of $\VFd(\Atwo)$ isomorphic to $\aff_{2}$ are conjugate under $\AutK$.
\ee
\end{thm}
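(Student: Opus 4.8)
The plan is to show that, in each of the three settings, a subalgebra of the given type is conjugate to the standard model \emph{precisely when} the étale map producing it is an automorphism; the four statements then all become reformulations of the Jacobian Conjecture. The essential input is the classification provided by Propositions~\ref{subP.prop}, \ref{subVec1.prop} and \ref{subVec2.prop}: every subalgebra isomorphic to $\Ptwo$ (resp.\ $\saff_{2}$, resp.\ $\aff_{2}$) has the form $P_{f,g}$ (resp.\ $\alpha(\saff_{2})$, resp.\ $\alpha(\aff_{2})$) for an étale $\alpha=(f,g)$, the standard model corresponding to $\alpha=\id$. I would route every equivalence through (i) rather than comparing conjugacy in $P$ and in $\VFd(\Atwo)$ directly.

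For (i)$\Leftrightarrow$(ii): if the Jacobian Conjecture holds and $Q\cong\Ptwo$, then $Q=P_{f,g}$ by Proposition~\ref{subP.prop}, and after rescaling $g$ we may assume $\{f,g\}=1$, so $\alpha=(f,g)$ is étale, hence an automorphism, hence $\alpha\in\SAutK=\SAutLA(P)$ carries $P_{x,y}$ to $Q$. Conversely, for any étale $\alpha=(f,g)$ with $\{f,g\}=1$, Remark~\ref{homP.rem} gives $P_{f,g}\cong\Ptwo$; assuming (ii) there is $\phi\in\AutLA(P)$ with $\phi(P_{x,y})=P_{f,g}$, which after normalizing $\phi(1)=1$ lies in $\SAutLA(P)=\SAutK$ and sends $\rad(P_{x,y})=\langle 1,x,y\rangle$ onto $\rad(P_{f,g})=\langle 1,f,g\rangle$. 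Then $\phi(x),\phi(y)$ are affine combinations of $f,g$ with invertible linear part, so Lemma~\ref{lem2} yields $K[x,y]=K[\phi(x),\phi(y)]=K[f,g]$; thus $\alpha$ is surjective, hence an automorphism, and the Jacobian Conjecture follows.

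The equivalences (i)$\Leftrightarrow$(iii) and (i)$\Leftrightarrow$(iv) rest on the same mechanism together with one observation I would isolate as a lemma: if $\beta$ is étale and $\beta(\aff_{2})=\aff_{2}$ (resp.\ $\beta(\saff_{2})=\saff_{2}$), then $\beta$ is affine, hence an automorphism. Indeed $\beta$ preserves the radical of the derived algebra, which is $\langle\dx,\dy\rangle=\langle D_{x},D_{y}\rangle$; by Lemma~\ref{equiv.lem} this forces $\langle D_{u},D_{v}\rangle=\langle D_{x},D_{y}\rangle$ for $\beta=(u,v)$, so $\langle 1,u,v\rangle=\langle 1,x,y\rangle$ and $u,v$ are affine-linear with $j(u,v)\neq 0$. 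Granting this, (i)$\Rightarrow$(iv) is immediate, since the Jacobian Conjecture turns the parametrizing $\alpha$ into an automorphism and then $\alpha(\aff_{2})$ is a literal conjugate of the standard $\aff_{2}$. For the converse I take an arbitrary étale $\alpha$, form $M=\alpha(\aff_{2})$, and use (iv) to get $\psi\in\AutK$ with $\psi(\aff_{2})=M$; a short check that the étale action composes with genuine conjugation gives $\psi^{-1}(\alpha(\aff_{2}))=(\psi^{-1}\!\circ\alpha)(\aff_{2})=\aff_{2}$, so $\psi^{-1}\!\circ\alpha$ is affine by the lemma and $\alpha=\psi\circ(\psi^{-1}\!\circ\alpha)$ is an automorphism. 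The $\saff_{2}$-version is word-for-word the same.

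Finally I would obtain (iii)$\Leftrightarrow$(iv) directly from the bijection $M\mapsto[M,M]$ between $\aff_{2}$- and $\saff_{2}$-subalgebras: surjectivity holds because $[\alpha(\aff_{2}),\alpha(\aff_{2})]=\alpha(\saff_{2})$, and injectivity is exactly the uniqueness in Proposition~\ref{subVec2.prop} (the Euler element extending a given $\saff_{2}$ is forced). Since $[\psi M,\psi M]=\psi[M,M]$ for $\psi\in\AutK$, the bijection is equivariant, so the two conjugacy statements coincide. The main obstacle I expect is not any single computation but the careful bookkeeping of the two distinct $K^{*}$-factors in $\AutLA(P)=\SAutLA(P)\rtimes K^{*}$ and $\AutK=\SAutK\rtimes K^{*}$, whose sections act differently on the homogeneous pieces of $P$; routing every equivalence through (i) is precisely what avoids comparing these two actions, and the one delicate point remaining is verifying that the étale ``action'' composes correctly with honest conjugation, so that $\psi^{-1}\!\circ\alpha$ genuinely acts as $\psi^{-1}\circ\alpha(\cdot)$.
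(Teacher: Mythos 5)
Your argument is correct, and it relies on exactly the same machinery as the paper: the classification results (Propositions~\ref{subP.prop}, \ref{subVec1.prop}, \ref{subVec2.prop}), the identification $\SAutLA(P)=\SAutK$, and the key observation that an \'etale endomorphism stabilizing the standard copy of $\aff_{2}$ (or $\saff_{2}$) must preserve $\rad(\saff_{2})=K\dx\oplus K\dy$ and hence be affine. The difference is purely organizational: the paper proves a single cycle (i)$\Rightarrow$(ii)$\Rightarrow$(iii)$\Rightarrow$(iv)$\Rightarrow$(i), whereas you route every statement through (i) and add a direct bridge (iii)$\Leftrightarrow$(iv) via the $\AutK$-equivariant bijection $M\mapsto[M,M]$. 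Your layout costs a little more (you must prove (ii)$\Rightarrow$(i) and (iii)$\Rightarrow$(i) directly, implications the paper never needs, and you correctly flag the normalization $\phi(1)=1$ needed to pass from $\AutLA(P)$ to $\SAutK$ while preserving $\Ptwo$), but it buys independence of the four equivalences and isolates the reusable lemma ``\'etale $+$ stabilizes the standard subalgebra $\Rightarrow$ affine,'' which in the paper is embedded inside the step (iv)$\Rightarrow$(i). Both proofs are complete; yours is slightly longer but more modular.
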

For the proof we need to compare the automorphisms of $P$ with those of the image $\mu(P)=\VFzero(\Atwo) \simeq P/K$. Since $K$ is the center $P$, we have a canonical homomorphism $F\colon\AutLA(P) \to \AutLA(P/K)$, $\phi\mapsto \bar\phi$.
\begin{lem}
The map $F\colon \AutLA(P) \to \AutLA(P/K)$ is an isomorphism.
\end{lem}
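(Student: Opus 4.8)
The plan is to show that $F$ is a bijective group homomorphism by treating injectivity and surjectivity separately. That $F$ is a well-defined homomorphism is immediate: every $\phi\in\AutLA(P)$ preserves the center $\cent(P)=K$ (Lemma~\ref{lem1}(a)), hence induces $\bar\phi\in\AutLA(P/K)$, and $\phi\mapsto\bar\phi$ is clearly multiplicative.

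For injectivity the point is that $P$ is \emph{perfect}, $[P,P]=P$. Indeed $\{x^{a}y^{b},x^{c}y^{d}\}=(ad-bc)\,x^{a+c-1}y^{b+d-1}$, and the choice $(a,b)=(p+1,0)$, $(c,d)=(0,q+1)$ yields a nonzero multiple of an arbitrary monomial $x^{p}y^{q}$ (with $\{x,y\}=1$ covering the constants). Now if $F(\phi)=\id$, then $\lambda(v):=\phi(v)-v\in K$ defines a linear map $\lambda\colon P\to K$, and since the values of $\lambda$ are central the homomorphism property gives $\lambda(\{u,v\})=\{\phi(u),\phi(v)\}-\{u,v\}=0$. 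Thus $\lambda$ vanishes on $[P,P]=P$, so $\phi=\id$ and $F$ is injective.

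For surjectivity I would regard $0\to K\to P\xrightarrow{\pi}P/K\to0$ as a central extension and fix the vector-space section identifying $P/K$ with $P_{+}:=\bigoplus_{d\ge1}P_{d}$. The extension is then classified by the $2$-cocycle $\omega(u,v):=(\{u,v\})_{0}$, the constant term of the bracket; the monomial formula shows $\omega(u,v)=\Omega(u_{1},v_{1})$, where $u_{1}$ denotes the degree-one part and $\Omega$ is the symplectic form on $P_{1}$ with $\Omega(x,y)=1$. Given $\bar\psi\in\AutLA(P/K)$ (viewed via the section as a map $P_{+}\to P_{+}$ that preserves the bracket modulo $K$), I seek a lift of the form $\phi(1)=t$ and $\phi|_{P_{+}}=\bar\psi+\ell$ with $t\in K^{*}$ and $\ell\colon P_{+}\to K$ linear. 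Comparing constant terms, $\phi$ is a Lie homomorphism exactly when $\ell(\{u,v\}_{P/K})=\omega(\bar\psi u,\bar\psi v)-t\,\omega(u,v)$ for all $u,v$; that is, the $2$-cocycle $\bar\psi^{*}\omega-t\,\omega$ must be a coboundary. Once such $t\neq0$ and $\ell$ are found, $\phi$ lifts $\bar\psi$ and acts on $K$ by the scalar $t$, so $\phi$ is bijective by the short five lemma and $F(\phi)=\bar\psi$.

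The main obstacle is precisely producing $t$ and $\ell$, i.e. the cohomological statement that $\bar\psi^{*}[\omega]$ is a nonzero multiple of $[\omega]$ in $H^{2}(P/K,K)$. The decisive input is that $H^{2}(P/K,K)$ is one-dimensional, spanned by $[\omega]$ — equivalently that $P$ is the universal central extension of the perfect Lie algebra $P/K$, with $H_{2}(P/K,K)=K$ — for then $\bar\psi^{*}[\omega]=t[\omega]$ automatically and $t\neq0$ because $\bar\psi^{*}$ is invertible on $H^{2}$. A concrete handle, which also normalizes the scalar, is to set $t:=\{f,g\}$ for any lifts $f,g\in P_{+}$ of $\bar\psi(\bar x),\bar\psi(\bar y)$: this already forces the obstructing cocycle $\bar\psi^{*}\omega-t\,\omega$ to vanish on $P_{1}\times P_{1}$, and reducing the full coboundary condition to this degree-one shadow is where I expect the genuine work to lie.
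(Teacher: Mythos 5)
Your reduction to well-definedness, injectivity, and surjectivity is the right frame, and your injectivity argument is correct and in fact more self-contained than the paper's: you show $P$ is perfect via $\{x^{p+1},y^{q+1}\}=(p+1)(q+1)x^{p}y^{q}$, observe that $\lambda(v)=\phi(v)-v$ takes central values so that $\lambda$ kills $[P,P]=P$, and conclude $\phi=\id$. (The paper instead deduces from $\bar\phi=\id$ that $\phi=(x+a,y+b)$ as an algebra automorphism and then reads off $a=b=0$ from $\phi(x^{2})$.)

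The surjectivity half, however, has a genuine gap, and you name it yourself: everything hinges on the claim that $\bar\psi^{*}[\omega]$ is a nonzero multiple of $[\omega]$ in $H^{2}(P/K,K)$, equivalently that $H^{2}(P/K,K)$ is one-dimensional (that $P$ is the universal central extension of $P/K$). This is not a routine verification — it is a nontrivial Lie algebra cohomology computation for an infinite-dimensional algebra, nothing in the paper supplies it, and your proposed ``concrete handle'' only controls the cocycle on $P_{1}\times P_{1}$, which does not determine its class: a $2$-cocycle on $P/K$ could be nonzero in cohomology while vanishing on degree-one elements (a coboundary $d\ell$ already vanishes there since $\{P_{1},P_{1}\}\subset K$ maps to $0$ in $P/K$, so the degree-one restriction is cohomologically blind). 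As it stands the argument is a reduction of the lemma to an unproved and harder statement. The paper's route avoids cohomology entirely and is the one to follow: given $\rho\in\AutLA(P/K)$, the image $\rho(\Pbtwo)$ is a subalgebra isomorphic to $\saff_{2}$, its preimage $L\subset P$ is isomorphic to $\Ptwo$ and hence equals $P_{f,g}$ for lifts $f,g$ of $\rho(\bar x),\rho(\bar y)$ by Proposition~\ref{subP.prop} (which gives $\{f,g\}\in K^{*}$ for free), and then Remark~\ref{homP.rem} produces the lift $x\mapsto f$, $y\mapsto g$ directly. If you want to salvage your approach, you would need to either prove the $H^{2}$ statement or, more realistically, use the subalgebra classification to produce the pair $(f,g)$ with $\{f,g\}\in K^{*}$ — at which point you have reproduced the paper's argument and the cocycle machinery is no longer doing any work.
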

\begin{proof}  If $\phi\in\ker F$, then $\phi(x)=x+a, \phi(y)=y+b$ where $a,b\in K$. By Lemma~\ref{aut.lem}, the $K$-algebra automorphisms $\alpha$ of $K[x,y]$ defined by $x\mapsto x+a$, $y \mapsto y+b$ is a Lie algebra automorphisms of $P$, and $\phi=\alpha$ by Lemma~\ref{lem2}. But then $\phi(x^{2}) = (x+a)^{2}= x^{2}+2ax + a^{2}$, and so $\bar\phi(\overline{x^{2}}) = \overline{x^{2}} + 2a\overline{x}$. Therefore, $a=0$, and similarly we get $b=0$, hence $\phi=\id_{P}$.

Put $\Pb := P/K$ and  let $\rho\colon \Pb \simto \Pb$ be a Lie algebra automorphism. Then $\bar L := \rho(\Pbtwo)\subset \Pb$ is a Lie subalgebra isomorphic to $\saff_{2}$ and thus $L:=p^{-1}(\bar L)$ is a Lie subalgebra of $P$ isomorphic to $\Ptwo$, by Proposition~\ref{subP.prop}. Choose  $f,g\in L$ such that $\bar f=\phi(\bar x)$ and $\bar g = \phi(\bar y)$. Then $\langle 1,f,g \rangle = \rad(L)$, and so $L=P_{f,g}$, by Proposition~\ref{subP.prop}. It follows that the map $\phi\colon x \mapsto f, y \mapsto g$ is an injective endomorphism of $P$ (see Remark~\ref{homP.rem}), and $\bar\phi = \rho$. Since $\rho$ is an isomorphism the same holds for $\phi$.
\end{proof}
\subsection*{Proof of Theorem~\ref{JC.thm}}
(i)$\Rightarrow$(ii): If $L \subset P$ is isomorphic to $\Ptwo$, then $L = P_{f,g}$ for some $f,g\in K[x,y]$ such that $\{f,g\}=1$ (Proposition~\ref{subP.prop}). By (i) we get $K[x,y] = K[f,g]$, and so the endomorphism $x \mapsto f, y\mapsto g$ of $K[x,y]$ is an isomorphism of $P$, mapping $\Ptwo$ to $L$.

(ii)$\Rightarrow$(iii): 
If $\bar L \subset \VF^{c}(\Atwo)$ is a Lie subalgebra isomorphic to $\saff_{2}$, then $\bar L = \mu(P_{f,g})$ for some $f,g \in K[x,y]$, by Proposition~\ref{subVec1.prop}. By (ii), $P_{f,g} = \alpha(\Ptwo)$ for some $\alpha\in\SAutLA(P)=\SAutK$. Hence $\bar L =   \mu( \alpha(\Pbtwo))=\bar\alpha(\saff_{2})$, by Lemma~\ref{equiv.lem}.

(iii)$\Rightarrow$(iv):
Let $M \subset \VFd(\Atwo)$ be a Lie subalgebra isomorphic to $\aff_{2}$, and set $M':=[M,M]\simeq \saff_{2}$. By (iii) there is an automorphism $\alpha\in\AutK$ such that $M'=\alpha(\saff_{2})$. It follows that $\alpha(\aff_{2}) = M$ since $M$ is determined by $\rad(M')$ as a Lie subalgebra, by Proposition~\ref{subVec2.prop}.

(iv)$\Rightarrow$(i): Let $f,g\in K[x,y]$ such that $\{f,g\}=1$, and let $\alpha\colon K[x,y] \to K[x,y]$ be the \'etale homomorphism defined by $\alpha(x)=f$ and $\alpha(y)=g$. Then $M:=\alpha(\aff_{2})\subset \VFd(\Atwo)$ is a Lie subalgebra isomorphic to $\aff_{2}$, by Lemma~\ref{equiv.lem}. By assumption (iv), there is an automorphism $\beta \in\AutK$ such that $\beta(\aff_{2}) = M$. It follows that $\beta^{-1}\circ \alpha$ is an \'etale homomorphism which induces an isomorphism of $\aff_{2}$, hence of $\saff_{2}$ and thus of $\rad(\saff_{2}) = K\dx \oplus K\dy$. This implies that $\beta^{-1}\circ\alpha$ is an automorphism, and so $K[f,g]=K[x,y]$.
\qed
\begin{rem}\lab{example.rem}
It is not true that the Lie subalgebras of $P$ or of $\VFd(\Atwo)$ isomorphic to $\sltwo$ are equivalent respectively conjugate. This can be seen from the example $S = Kx^{2}y\oplus Kxy \oplus Ky \subset P$ which is isomorphic to $\sltwo$, but not equivalent to $Kx^{2}\oplus Kxy \oplus Ky^{2}$ under $\AutLA(P)$. In fact, the element $x^{2}y$ does not act locally finitely on $P$.
\end{rem}

\subsection*{Algebraic Lie algebras}
If an algebraic group $G$ acts on an affine variety $X$ we get a canonical anti-homomorphism of Lie algebras $\Phi\colon\Lie G \to \VF(X)$  defined in the usual way:
$$
\Lie G \ni A \mapsto \xi_{A} \text{ with } (\xi_{A})_{x} := d\phi_{x}(A) \text{ for } x\in X,
$$
where $\phi_{x}\colon G \to X$ is the orbit map $g\mapsto gx$. A Lie algebra $L \subset \VF(X)$ is called {\it algebraic\/} if $L$ is contained in  $\Phi(\Lie G)$ for some action of an algebraic group $G$ on $X$. If is shown in \cite{CoDr2003From-Lie-algebras-} that $L$ is algebraic if and only if $L$ acts locally finitely on $\VF(X)$. With this result we get the following consequence of our Theorem 1.
\begin{cor}\lab{corollary} The following statements are equivalent.
\be
\item[(i)] The Jacobian Conjecture holds in dimension 2.
\item[(ii)] All Lie subalgebras of $\VFd(\Atwo)$ isomorphic to $\saff_{2}$ are algebraic.
\item[(iii)] All Lie subalgebras of $\VFd(\Atwo)$ isomorphic to $\aff_{2}$ are algebraic.
\ee
\end{cor}
\begin{proof}
It is clear that the equivalent statements (i), (ii) or (iii) of Theorem 1 imply (ii) and (iii) from the corollary. It follows from the Propositions~\ref{subVec1.prop} and \ref{subVec2.prop} that every Lie subalgebra $L$ isomorphic to $\saff_{2}$ is contained in a Lie subalgebra $Q$ isomorphic to $\aff_{2}$, hence (iii) implies (ii). It remains to prove that (ii) implies (i).

We will show that (ii) implies that $L$ is equivalent to $\saff_{2}$. Then the claim follows from Theorem 1.  By (ii), there is a connected algebraic group $G$ acting faithfully on $\Atwo$ such that $\Phi(\Lie G)$ contains $L$. Therefore, $\Lie G$ contains a subalgebra $\s$ isomorphic to $\sltwo$, and so  $G$ contains a closed subgroup $S$ such that $\Lie S = \s$. Since every action of $\SLtwo$ on $\Atwo$ is linearizable (see \cite{KrPo1985Semisimple-group-a}), there is an automorphism $\alpha$ such that $\alpha(\s) = \sltwo = \langle x\dy,y\dx,x\dx - y\dy \rangle$. But this implies, by Corollary~\ref{subVec.cor}, that $\alpha(L) =\saff_{2}$. 
\end{proof}


\end{document}